\def\BibTeX{{\rm B\kern-.05ema{\sc i\kern-.025em b}\kern-.08em
    T\kern-.1667em\lower.7ex\hbox{E}\kern-.125emX}}
\DeclareMathAlphabet{\pazocal}{OMS}{zplm}{m}{n}
\newcommand{\R}{\mathbb{R}}
\newcommand{\Gpazo}{\pazocal{G}}
\newcommand{\Vpazo}{\pazocal{V}}
\newcommand{\Lpazo}{\pazocal{L}}
\newcommand{\Dpazo}{\pazocal{D}}
\newcommand{\Spazo}{\pazocal{S}}
\newcommand{\Lip}{\textnormal{Lip}}
\newcommand{\loc}{\textnormal{loc}}
\newcommand{\textbn}[1]{\textnormal{\textbf{#1}}}
\newcommand{\co}{\overline{\textnormal{co}} \hspace{0.05cm}}
\newcommand{\xb}{\boldsymbol{x}}
\newcommand{\yb}{\boldsymbol{y}}
\newcommand{\Ab}{\boldsymbol{A}}
\newcommand{\Lb}{\boldsymbol{L}}
\newcommand{\Kb}{\boldsymbol{K}}
\newcommand{\Qb}{\boldsymbol{Q}}
\newcommand{\Db}{\boldsymbol{D}}
\renewcommand{\epsilon}{\varepsilon}
\newcommand{\INTSeg}[4]{\int_{#3}^{#4} #1 \textnormal{d} #2}
\newcommand{\derv}[2]{\frac{\textnormal{d} #1}{ \textnormal{d} #2}}
\newtheorem{lem}{Lemma}
\newtheorem{Def}{Definition}
\newtheorem{thm}{Theorem}
\newtheorem{prop}{Proposition}
\newenvironment{taggedhyp}[1]
    {\taggedhypx}
    {\endtaggedhypx}
\begin{document}

\title{Exponential Consensus Formation in Time-Varying Multiagent Systems via Compactification Methods}

\author{\IEEEauthorblockN{1\textsuperscript{st} Beno\^it Bonnet-Weill}
\IEEEauthorblockA{\textit{Laboratoire des Signaux et Syst\`emes,} \\ \textit{Universit\'e Paris-Saclay, CNRS,} \\
\textit{CentraleSupélec, 91190 Gif-sur-Yvette, France.}\\
\textit{E-mail}: {\tt benoit.bonnet-weill@centralesupelec.fr}}
\and
\IEEEauthorblockN{2\textsuperscript{nd} Mario Sigalotti}
\IEEEauthorblockA{\textit{Inria Paris and Laboratoire Jacques-Louis Lions,} \\
\textit{Sorbonne Université, Université Paris-Diderot SPC,} \\ 
\textit{CNRS, Inria, 75005 Paris, France}  \\
\textit{E-mail}: {\tt mario.sigalotti@inria.fr}}
}

\maketitle

\begin{abstract}
In this article, we establish exponential contraction results for the diameter and variance of general first-order multiagent systems. Our approach is based on compactification techniques, and works under rather mild assumptions. Namely, we posit that either the scrambling coefficient, or the algebraic connectivity of the averaged interaction graphs of the system over all time windows of a given length are uniformly positive. 
\end{abstract}

\begin{IEEEkeywords}
Multiagent Systems, Consensus, Compactification, Scrambling, Algebraic Connectivity.
\end{IEEEkeywords}


\section{Introduction}

The study of self-organisation in cooperative dynamics has been a prominent topic in multiagent system analysis and network theory for several decades. Since the seminal works of DeGroot \cite{DeGroot1974}, Vicksek \cite{Vicsek1995} and later of Hegselmann and Krause \cite{Hegselmann2002}, an extensive literature has been preoccupied with finding sharp sufficient conditions for the emergence of global clustering -- called \textit{consensus} -- in first-order multiagent dynamics. Some of the farthest reaching of such contributions can be attributed to Moreau \cite{Moreau2005}, Olfati-Saber and Murray \cite{Olfati2004} and Jadbabaie et.al \cite{Jadbabaie2003}, who established very general conditions bearing on the connectivity properties of the interactions between agents entailing such asymptotic behaviour. 

Following the introduction of the now-called Cucker--Smale alignment model in \cite{CS2}, the investigation of clustering patterns in cooperative dynamics spread further to communities working at the interface between particle systems and partial differential equations \cite{Carrillo2010,HaLiu}, with the aim of studying macroscopic approximations of cooperative dynamics and to derive scale-free clustering conditions by means of energy methods. In this context, the two main candidate Lyapunov functions have historically been the \textit{variance} on the one hand, which leads to an analogue of the $L^2$-stability theory for multiagent dynamics, and the \textit{diameter} on the other, which provides $L^{\infty}$-type estimates. Besides, it has long been known that the decay at infinity of both functions -- and in particular its exponential character or lack thereof -- is tightly linked to relevant graph-theoretic quantities related to the connectivity properties of the underlying interaction graphs, namely the \textit{algebraic connectivity} introduced by Fiedler \cite{Fiedler1973} and the scrambling coefficient \cite{Seneta1979} arising in the stability theory of stochastic matrices. We point the interested readers to the excellent survey \cite{Motsch2014} for a complete description of the foundations of the finite-dimensional theory, and to the work \cite{ConsensusGraphon} by the authors where we generalise the underlying core results to infinite-dimensional graphon dynamics. We also quote \cite{Boudin2022}, which keenly builds on the approach of \cite{Olfati2004} to establish the exponential decay of the $L^2$-norm in very general time-independent asymmetric graphon dynamics.  

\medskip

In this article, our goal is to derive exponential contractivity results for multiagent systems of the form
\begin{equation}
\label{eq:IntroMultiagent}
\dot x_i(t) = \frac{1}{N} \sum_{j=1}^N a_{ij}(t) \phi(|x_i(t) - x_j(t)|)(x_j(t) - x_i(t)),
\end{equation}
whose interaction topologies are time-dependent and parsimonious. More precisely, we assume that the underlying graphs are allowed to be very sparse at each time, and only well-connected on average. A relevant way to mathematically translate this idea is to require that the right-hand side of \eqref{eq:IntroMultiagent} be \textit{persistent} in a suitable sense. Taking inspiration from the aforedescribed corpus on consensus formation, our persistence conditions take the form of uniformly positive lower bounds on the scrambling coefficient and algebraic connectivity of the average interaction graphs over all time windows of a given length, see Definition~\ref{def:Persistent} for precise mathematical statements. Under this kind of assumption, the exponential decay of the variance was previously established in \cite{CSComFail} for symmetric topologies, by leveraging strict Lyapunov design techniques borrowed from \cite{MazencMalisoff}. However, transposing such an approach to directed interaction topologies or to the derivation of exponential contraction estimates for the diameter proved infeasible so far, as it heavily relied on properties satisfied by symmetric matrices along with the fact that the variance is essentially a Hilbert seminorm.

In what follows, we successfully circumvent these technical difficulties by elaborating a strategy based on compactification methods. The latter takes root in a general principle due to Fenichel \cite{fenichel}, which guarantees that if a system driven by a class of time-shift invariant and weakly-$^{\star}$ compact signals happens to be attractive, then it must be exponentially stable. In Theorem \ref{thm:Main1} below, we are thus able to show that the diameter of solutions of \eqref{eq:IntroMultiagent} decays exponentially under an average scrambling persistence condition, thus providing a general $L^{\infty}$-counterpart to \cite{CSComFail} while answering an open problem stated in \cite{ConsensusGraphon}. The proof relies on a combination of the compactness result of Lemma~\ref{lem:CompactnessSolutionSet}, together with the regular diameter decay established in Lemma~\ref{lem:DiameterDecay}. This last result essentially contains the main technical difficulty of the approach, namely that having an average interaction topology whose scrambling coefficient is positively lower-bounded entails the strict decay of the diameter over the underlying time interval. In Theorem~\ref{thm:Main2}, we likewise prove that the variance of a linear variant of \eqref{eq:IntroMultiagent} decays exponentially under the assumption that the interaction graphs are \textit{balanced}, and that the algebraic connectivities of their averages over time windows of a preset length are uniformly positively lower-bounded. 

The main interest of our approach and those developed in \cite{CSComFail,ConsensusGraphon,Boudin2022} is that, unlike most preexisting works on consensus formation for heterogeneous multiagent systems bearing on graph-theoretic quantities, our results are based on quantitative energy estimates and, as such, hopefully transposable to infinite-dimensional systems such as meanfield or graphon dynamics, in the spirit of \cite{ConsensusGraphon}. It should again be stressed that the key fact allowing for the strategy developed in this work to function is that signals complying with scrambling or connectivity persistence conditions are stable under weak-$^{\star}$ convergence, as shown in Proposition~\ref{prop:CompactnessPersistent}, a property which may very well fail to hold for many larger classes of signals satisfying less uniform connectivity conditions such as those of \cite{Moreau2005} or \cite{Barabanov2018}. We finally mention that a similar approach has been recently adopted in \cite{AnconaBentaibiRossi}, where consensus is studied under a persistence condition expressing uniform positivity of the average interaction graph.

\medskip

The structure of the paper is the following. In Section~\ref{section:Preliminaries}, we recollect basic facts about multiagent dynamics and graph theory. We then prove the exponential diameter and variance decays in Section~\ref{section:Diameter} and Section~\ref{section:Variance} respectively.


\section{Preliminaries}
\label{section:Preliminaries}


\subsection{Multiagent dynamics and some of their properties}

In this section, we recollect some elementary facts about first-order nonlinear cooperative dynamics of the form 
\begin{equation}
\label{eq:Multiagent}
\left\{
\begin{aligned}
& \dot x_i(t) = \frac{1}{N} \sum_{j=1}^N a_{ij}(t) \phi(|x_i(t)-x_j(t)|)(x_j(t)-x_i(t)), \\
& x_i(0) = x_i^0,
\end{aligned}
\right.
\end{equation}
wherein $x_i(\cdot) \in \Lip_{\loc}(\R_+,\R^d)$ represents the evolution through time of the position of the agent with label $i \in\{1,\dots,N\}$ in $\R^d$. Here $\R_+$ stands for the half-line $[0,+\infty)$, while later in the text we use $\R_+^*$ for $(0,+\infty)$. Throughout this article, we will work under the following modelling assumptions, which are quite standard. 

\begin{taggedhyp}{$\textbn{(H)}$} \hfill
\label{hyp:H}
\begin{enumerate}
\item[$(i)$] The signals $t \in \R_+ \mapsto a_{ij}(t) \in [0,1]$ are Lebesgue measurable for each $i,j \in \{1,\dots,N\}$. 
\item[$(ii)$] The map $\phi : \R_+ \to \R_+^*$ is locally Lipschitz. 
\end{enumerate}
\end{taggedhyp}

In the sequel, for the sake of brevity, we will use the condensed notation $\xb := (x_1,\dots,x_N) \in (\R^d)^N$ to refer to the collection of positions of all the agents, and use $\co(\xb)\subset \R^d$ to denote the polytope defined as the convex hull of the entries of one such element $\xb \in (\R^d)^N$.

\begin{prop}[Elementary stability estimates]
\label{prop:NormEst}
For every solution $\xb(\cdot) \in \Lip_{\loc}(\R+,(\R^d)^N)$ of \eqref{eq:Multiagent}, it holds that
\begin{equation*}
\max_{1 \leq i \leq N} |x_i(s)| \leq \max_{1 \leq i \leq N} |x_i(t)| 
\end{equation*}
as well as 
\begin{equation*}
\co(\xb(s)) \subset \co(\xb(t)) 
\end{equation*}
for all times $s \geq t \geq 0$.
\end{prop}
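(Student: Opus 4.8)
The plan is to rewrite the dynamics in ``averaging'' form
$\dot x_i(t)=\sum_{j=1}^N\lambda_{ij}(t)\,(x_j(t)-x_i(t))$, where the weights $\lambda_{ij}(t):=\tfrac1N a_{ij}(t)\,\phi(|x_i(t)-x_j(t)|)$ are nonnegative, Lebesgue measurable in $t$ by \ref{hyp:H}, and --- along any fixed solution $\xb(\cdot)\in\Lip_\loc(\R_+,(\R^d)^N)$ --- essentially bounded on every compact time interval, since such a solution is bounded there and $\phi$ is locally bounded. The crucial structural fact is then that each velocity $\dot x_i(t)$ is a nonnegative linear combination of the vectors $x_j(t)-x_i(t)$ pointing from agent $i$ towards the other agents, so it always ``points into'' the current configuration; this is exactly what forces both the convex hull and the maximal norm to be monotone.

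I would first establish the inclusion $\co(\xb(s))\subset\co(\xb(t))$ and then deduce the norm bound from it. Fix $t\ge0$ and set $K:=\co(\xb(t))$, which is compact and convex, hence equal to the intersection of the closed half-spaces $\{y\in\R^d:\langle y,p\rangle\le c\}$ that contain it. It therefore suffices to show that, for every such pair $(p,c)$ --- so that $\langle x_i(t),p\rangle\le c$ for all $i$ --- one has $\langle x_i(s),p\rangle\le c$ for all $i$ and all $s\ge t$. Consider $M(s):=\max_{1\le i\le N}\langle x_i(s),p\rangle$. As a finite maximum of locally Lipschitz functions it is locally Lipschitz, hence differentiable almost everywhere; moreover, at every $s$ where all the $x_i(\cdot)$ are differentiable, where \eqref{eq:Multiagent} holds, and where $M$ is differentiable, any index $i$ attaining the maximum makes $s$ a local minimum of the nonnegative map $\tau\mapsto M(\tau)-\langle x_i(\tau),p\rangle$, whence $M'(s)=\langle\dot x_i(s),p\rangle=\sum_{j=1}^N\lambda_{ij}(s)\,\langle x_j(s)-x_i(s),p\rangle\le0$, because $\langle x_j(s),p\rangle\le M(s)=\langle x_i(s),p\rangle$ for every $j$ and $\lambda_{ij}(s)\ge0$. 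Thus $M$ is non-increasing on $[t,+\infty)$ and $M(s)\le M(t)\le c$ for $s\ge t$, so that $x_i(s)\in K$ for all $i$, i.e. $\co(\xb(s))\subset K$.

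The norm estimate then follows without further work: the Euclidean norm $y\mapsto|y|$ being convex, its maximum over the polytope $\co(\xb(s))$ is attained at one of its vertices, so $\max_i|x_i(s)|=\max_{y\in\co(\xb(s))}|y|$; combining this with $\co(\xb(s))\subset\co(\xb(t))$ gives $\max_i|x_i(s)|\le\max_{y\in\co(\xb(t))}|y|=\max_i|x_i(t)|$ for all $s\ge t$. Alternatively, one may run the same differential-inequality argument directly on $s\mapsto\max_i|x_i(s)|^2$, using $\tfrac{\dn}{\dn s}|x_i(s)|^2=2\sum_j\lambda_{ij}(s)\langle x_i(s),x_j(s)-x_i(s)\rangle\le 2\sum_j\lambda_{ij}(s)\big(|x_i(s)|\,|x_j(s)|-|x_i(s)|^2\big)\le0$ at an index $i$ realising the maximum. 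The only mildly delicate point in the whole argument is the handling of the function $M$: since the solutions are merely locally Lipschitz, its derivative exists only almost everywhere, and one must argue through the local-minimum (envelope) identity above --- or equivalently through one-sided Dini derivatives --- rather than through a naive chain rule; the remaining steps are routine.
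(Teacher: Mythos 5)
Your argument is correct. Note that the paper does not actually prove this proposition in-house: its ``proof'' is a one-line citation to \cite{ConsensusGraphon}, so you have supplied the full argument that the authors delegate elsewhere. What you give is the standard one for such cooperative dynamics --- writing the velocity as a nonnegative combination of the vectors $x_j-x_i$, showing that $\max_i\langle x_i(\cdot),p\rangle$ is nonincreasing for every direction $p$ via an a.e.\ derivative computation at a maximising index, intersecting the resulting half-spaces to get the nested convex hulls, and reading off the norm bound from convexity of $y\mapsto|y|$ on the polytope. The one delicate point, namely that $M$ is only Lipschitz so one must work through the envelope identity or Dini derivatives rather than a naive chain rule, is exactly the point you flag and handle correctly; the step is legitimate because $M$ is locally Lipschitz, hence absolutely continuous, so $M'\le0$ a.e.\ does integrate to monotonicity.
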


\begin{proof}
This follows, for instance, from \cite[Propositions~2.10 and 2.11]{ConsensusGraphon}.
\end{proof}

As a direct consequence of these estimates, there exist for each 
compact set $\Kb^0 \subset (\R^d)^N$ a pair of positive constants $0 < c_{\phi} \leq C_{\phi}$ such that 
\begin{equation}
\label{eq:PhiEst}
c_{\phi} \leq \phi(|x_i(t) - x_j(t)|) \leq C_{\phi}
\end{equation}
for each $i,j \in \{1,\dots,N\}$ and every solution $\xb(\cdot) \in \Lip(\R_+,\R^N)$ of \eqref{eq:Multiagent} starting from a datum $\xb^0 \in \Kb^0$. In particular, every solution of \eqref{eq:Multiagent} is globally Lipschitz. 

Given a solution $\xb(\cdot) \in \Lip(\R_+,(\R^d)^N)$ of \eqref{eq:Multiagent}, we denote its \textit{diameter} by
\begin{equation*}
\Dpazo(\xb(t)) := \max_{1 \leq i,j \leq N} |x_i(t) - x_j(t)|,
\end{equation*}
and likewise define its \textit{variance} as
\begin{equation*}
\Vpazo(\xb(t)) := \frac{1}{N} \sum_{i=1}^N |x_i(t) - \bar{\xb}(t)|^2, 
\end{equation*}
where $\bar{\xb}(t) := \sum_{i=1}^N x_i(t)$ is the average position of the system. Both functions are ubiquitous in the literature on collective dynamics pertaining to clustering behaviours (see, e.g., \cite{Motsch2014}) and will feature in our main results.

We end this section by establishing a handy geometric characterisation for the agents maximising the diameter of the system at a given time. 

\begin{prop}[Geometry of points maximising the diameter]
\label{prop:Diameter}
Given a solution $\xb(\cdot) \in \Lip(\R_+,(\R^d)^N)$ of \eqref{eq:Multiagent} and a pair of indices $(i,j) \in \{1,\dots,N\}^2$ such that
\begin{equation*}
\Dpazo(\xb(t)) = |x_i(t) - x_j(t)| 
\end{equation*}
at some time $t \geq 0$, it holds for all $s \geq t$ that 
\begin{equation*}
\langle y , x_i(t) - x_j(t) \rangle < \langle x_i(t) , x_i(t) - x_j(t) \rangle
\end{equation*}
for every $y \in \co(\xb(s)) \setminus \{x_i(t)\}$, and similarly
\begin{equation*}
\langle x_j(t) , x_i(t) - x_j(t) \rangle < \langle y , x_i(t) - x_j(t) \rangle
\end{equation*}
for every $y \in \co(\xb(s)) \setminus \{x_j(t)\}$. 
\end{prop}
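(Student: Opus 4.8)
The claim is essentially a separating-hyperplane statement for the polytope $\co(\xb(s))$ in the direction $x_i(t) - x_j(t)$, together with a strict version of the monotonicity asserted by Proposition~\ref{prop:NormEst}. The plan is to fix the direction $v := x_i(t) - x_j(t)$ and consider the linear functional $y \mapsto \langle y , v \rangle$ on $\R^d$. First I would establish the \emph{non-strict} inequalities $\langle y, v\rangle \le \langle x_i(t), v\rangle$ and $\langle x_j(t), v\rangle \le \langle y, v\rangle$ for all $y \in \co(\xb(t))$: since $|x_i(t) - x_j(t)| = \Dpazo(\xb(t))$ is maximal, for any index $k$ we have $|x_i(t) - x_k(t)| \le |x_i(t) - x_j(t)|$ and $|x_k(t) - x_j(t)| \le |x_i(t) - x_j(t)|$; expanding $|x_i(t) - x_k(t)|^2 = |v|^2 - 2\langle v, x_k(t) - x_j(t)\rangle + |x_k(t) - x_j(t)|^2 \le |v|^2$ and rearranging yields $\langle v, x_k(t) - x_j(t)\rangle \ge \tfrac12 |x_k(t) - x_j(t)|^2 \ge 0$, hence $\langle x_j(t), v\rangle \le \langle x_k(t), v\rangle$; the bound $\langle x_k(t), v\rangle \le \langle x_i(t), v\rangle$ follows symmetrically. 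Taking convex combinations extends both inequalities to all of $\co(\xb(t))$.

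Next I would upgrade non-strict to strict on $\co(\xb(s))$ for $s \ge t$. By Proposition~\ref{prop:NormEst} we have $\co(\xb(s)) \subset \co(\xb(t))$, so it suffices to argue on $\co(\xb(t))$, where by the previous step the functional $\langle \cdot, v\rangle$ attains its maximum over the polytope at $x_i(t)$. The strict inequality for $y \ne x_i(t)$ then amounts to showing that $x_i(t)$ is the \emph{unique} maximiser. This is where the inequality $\langle v, x_k(t) - x_j(t)\rangle \ge \tfrac12|x_k(t)-x_j(t)|^2$ does the work: equality $\langle x_k(t), v\rangle = \langle x_i(t), v\rangle$ forces $x_k(t) = x_i(t)$ (take $k$ with $\langle x_k(t),v\rangle = \langle x_i(t),v\rangle$; combining with $\langle v, x_k(t)-x_j(t)\rangle = \langle v, x_i(t) - x_j(t)\rangle = |v|^2$ and the quadratic bound gives $|x_k(t) - x_j(t)|^2 \le 2|v|^2 - |x_k(t)-x_j(t)|^2$... ). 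More cleanly: if $y \in \co(\xb(t))$ satisfies $\langle y, v\rangle = \langle x_i(t), v\rangle =: \mu$, write $y = \sum_k \lambda_k x_k(t)$ with $\lambda_k \ge 0$, $\sum_k \lambda_k = 1$; since $\langle x_k(t), v\rangle \le \mu$ for every $k$, the equality $\sum_k \lambda_k \langle x_k(t), v\rangle = \mu$ forces $\langle x_k(t), v\rangle = \mu$ whenever $\lambda_k > 0$, and then the quadratic estimate with $k$ and $i$ interchanged, namely $\langle v, x_i(t) - x_k(t)\rangle \ge \tfrac12 |x_i(t) - x_k(t)|^2$, combined with $\langle v, x_i(t) - x_k(t)\rangle = 0$, gives $x_k(t) = x_i(t)$; hence $y = x_i(t)$. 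The argument for $x_j(t)$ being the unique minimiser is identical after swapping the roles of $i$ and $j$ (equivalently, replacing $v$ by $-v$).

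The only genuine subtlety — and the main thing to get right — is keeping track of which quadratic inequality is being used where: the bound $|x_i(t) - x_k(t)| \le \Dpazo(\xb(t))$ controls how far any agent can lie \emph{past} $x_j(t)$ in the direction $v$, while $|x_k(t) - x_j(t)| \le \Dpazo(\xb(t))$ controls how far past $x_i(t)$ in the direction $-v$; the strictness then comes precisely from the fact that these inequalities are \emph{strict} unless the agent coincides with $x_i(t)$ (resp.\ $x_j(t)$). Everything else is a routine convexity argument, and the containment $\co(\xb(s)) \subset \co(\xb(t))$ from Proposition~\ref{prop:NormEst} handles the time dependence with no extra effort.
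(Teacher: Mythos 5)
Your proof is correct and follows essentially the same route as the paper: establish the non-strict separation $\langle x_j(t),v\rangle \le \langle x_k(t),v\rangle \le \langle x_i(t),v\rangle$ for the vertices, upgrade it to strictness by showing equality forces $x_k(t)=x_i(t)$ (resp.\ $x_k(t)=x_j(t)$), extend to the convex hull by linearity, and invoke the inclusion $\co(\xb(s))\subset\co(\xb(t))$ from Proposition~\ref{prop:NormEst}. The only difference is that you carry out explicitly the quadratic expansion and the strictness argument that the paper delegates to a citation and to the reader, which makes your version more self-contained.
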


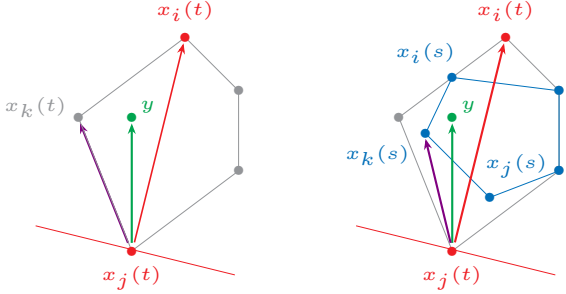
\begin{figure}[!ht]
\resizebox{0.9\linewidth}{!}{
\begin{tikzpicture}
\draw[Red, line width = 0.15mm, -{Stealth[length=1mm]}] (0.025,0.08) -- + (0.46,1.865); 
\draw[Red, line width = 0.05mm, xshift=0.1cm, yshift = -0.015cm] (-1,0.25) --+ (1.865,-0.46);
\draw[Green, line width = 0.2mm, -{Stealth[length=1mm]}] (0,0.075) -- (0,1.195); 
\draw[violet, line width = 0.2mm, -{Stealth[length=1mm]}] (-0.035,0.075) -- (-0.485,1.2);  
\draw[Gray, line width = 0.05mm] plot [smooth cycle, tension=0] coordinates {(0,0) (-0.5,1.25) (0.5,2) (1,1.5) (1,0.75)};
\draw[Red] (0,0) node {\tiny $\bullet$};
\draw[Red] (0.5,2) node {\tiny $\bullet$};
\draw[Gray] (1,1.5) node {\tiny $\bullet$};
\draw[Gray] (1,0.75) node {\tiny $\bullet$};
\draw[Gray] (-0.5,1.25) node {\tiny $\bullet$};
\draw[Green] (0,1.25) node {\tiny $\bullet$}; 
\draw[Red] (0,-0.25) node {\tiny $x_j(t)$};
\draw[Red] (0.5,2.25) node {\tiny $x_i(t)$};
\draw[Gray] (-0.9,1.35) node {\tiny $x_k(t)$};
\draw[Green] (0.15,1.35) node {\tiny $y$}; 
\begin{scope}[xshift = 3cm]
\draw[Gray, line width = 0.05mm] plot [smooth cycle, tension=0] coordinates {(0,0) (-0.5,1.25) (0.5,2) (1,1.5) (1,0.75)};
\draw[RoyalBlue, line width = 0.05mm] plot [smooth cycle, tension=0] coordinates {(0.35,0.5) (-0.25,1.1) (0,1.625) (1,1.5) (1,0.75)};
\draw[Red, line width = 0.2mm, -{Stealth[length=1mm]}] (0.025,0.08) -- + (0.46,1.865); 
\draw[Red, line width = 0.05mm, xshift=0.1cm, yshift = -0.015cm] (-1,0.25) -- + (1.865,-0.46);
\draw[Green, line width = 0.2mm, -{Stealth[length=1mm]}] (0,0.075) -- (0,1.195);
\draw[violet, line width = 0.2mm, -{Stealth[length=1mm]}] (-0.015,0.075) -- (-0.25,1.05);
\draw[Red] (0,0) node {\tiny $\bullet$};
\draw[Red] (0.5,2) node {\tiny $\bullet$};
\draw[Gray] (1,1.5) node {\tiny $\bullet$};
\draw[Gray] (1,0.75) node {\tiny $\bullet$};
\draw[Gray] (-0.5,1.25) node {\tiny $\bullet$};
\draw[Red] (0,-0.25) node {\tiny $x_j(t)$};
\draw[Red] (0.5,2.25) node {\tiny $x_i(t)$};
\draw[NavyBlue] (0.6,0.8) node {\tiny $x_j(s)$};
\draw[NavyBlue] (-0.25,1.85) node {\tiny $x_i(s)$};
\draw[NavyBlue] (-0.7,0.9) node {\tiny $x_k(s)$};
\draw[Green] (0.15,1.35) node {\tiny $y$}; 
\draw[NavyBlue] (0,0) + (0.35,0.5) node {\tiny $\bullet$};
\draw[NavyBlue] (0.5,2) + (-1/2,-0.375) node {\tiny $\bullet$};
\draw[NavyBlue] (1,1.5) node {\tiny $\bullet$};
\draw[NavyBlue] (1,0.75) node {\tiny $\bullet$};
\draw[NavyBlue] (-0.5,1.25) + (0.25,-0.15) node {\tiny $\bullet$};
\draw[Green] (0,1.25) node {\tiny $\bullet$}; 
\end{scope}
\end{tikzpicture}}
\caption{Illustration of the geometric result of Proposition~\ref{prop:Diameter} for a multiagent system snapped at times $t \geq 0$ (left) and $s \geq t$ (right): all scalar products of the form $\langle x_i(t) - x_j(t) , y - x_j(t) \rangle$ remain strictly positive for every point $y \in \co(\xb(s)) \setminus \{x_j(t)\}$ with $s \geq t$.}
\end{figure}

\begin{proof}
We only prove the first inequality, as the second one can be derived by simply exchanging the roles of $i$ and $j$. Note first that by \cite[Lemma 3.4]{ConsensusGraphon}, one has 
\begin{equation*}
\langle x_k(t) , x_i(t) - x_j(t) \leq \langle x_i(t) , x_i(t) - x_j(t) \rangle
\end{equation*}
for all times $t \geq 0$ and every $k \in \{1,\dots,N\}$. Besides, it can be shown by contradiction that the inequality in the previous expression is strict if $x_k(t) \neq x_i(t)$, which yields
\begin{equation*}
\langle y(t) , x_i(t) - x_j(t) \rangle  < \langle x_i(t) , x_i(t) - x_j(t) \rangle
\end{equation*}
for every $y(t) \in \co(\xb(t)) \setminus \{x_i(t)\}$ by linearity. We may then conclude by noting that $\co(\xb(s)) \subset \co(\xb(t))$ for all times $s \geq t$ as a consequence of Proposition~\ref{prop:NormEst}. 
\end{proof}


\subsection{Graph theory and time-dependent interaction topologies}

In this second preliminary section, we recollect basic facts about graphs and their connectivity properties. 

Given some integer $N \geq 1$, we shall consider interaction digraphs represented by their \textit{adjacency matrices} $\Ab_N := (a_{ij})_{1 \leq i,j \leq N} \in \Lpazo(\R^N)$, whose coefficients are real numbers valued in $[0,1]$ satisfying $a_{ii} = 1$ for each $i \in \{1,\dots,N\}$. We recall below the definition of the so-called \textit{scrambling coefficient} for such graphs, see e.g. \cite{Seneta1979}, which are known to be tightly linked to the contractivity properties of stochastic matrices. 

\begin{Def}[Scrambling coefficient]
\label{def:Scrambling}
The \textnormal{scrambling coefficient} of a graph with adjacency matrix $\Ab_N \in \Lpazo(\R^N)$ is defined by 
\begin{equation}
\label{eq:Scrambling}
\eta(\Ab_N) := \min_{1 \leq i,j \leq N} \frac{1}{N} \sum_{k=1}^N \min\{a_{ik} , a_{jk} \}. 
\end{equation}
\end{Def}

In what follows, we say that an adjacency matrix $\Ab_N \in \Lpazo(\R^N)$ is \textit{balanced} if its in- and out-degrees are equal, i.e.
\begin{equation*}
d_i := \sum_{j=1}^N a_{ij} = \sum_{j=1}^N a_{ji} 
\end{equation*}
for every $i \in \{1,\dots,N\}$. We can then associate with such a graph its (normalised) \textit{Laplacian matrix}, defined by 
\begin{equation*}
\Lb_N := \tfrac{1}{N} \big(\Db_N - \Ab_N \big),
\end{equation*}
where $\Db_N := \mathrm{diag}(d_1,\dots,d_N) \in \Lpazo(\R^N)$. This allows us to define the so-called \textit{algebraic connectivity}, which is another quantity of interest when studying the clustering properties of an interaction topology. It was introduced by Fiedler for symmetric graphs in \cite{Fiedler1973}, and later extended to general digraphs by Wu in \cite{Wu2005bis,Wu2005ter}.

\begin{Def}[Algebraic connectivity]
\label{def:Algebraic}
The \textnormal{algebraic connectivity} of a balanced graph with Laplacian matrix $\Lb_N \in \Lpazo(\R^N)$, denoted by $\lambda_2(\Lb_N)$, is the largest real number such that
\begin{equation}
\label{eq:Lambda2Charac}
\frac{1}{2N^2} \sum_{i,j=1}^N a_{ij} |x_i - x_j|^2 \geq \lambda_2(\Lb_N) \Vpazo(\xb)
\end{equation}
for all $\xb \in (\R^d)^N$.
\end{Def}

The main objective of this article is to derive quantitative convergence results for systems of the form \eqref{eq:Multiagent} driven by time-varying graphs $t \in \R_+ \mapsto \Ab_N(t) \in \Lpazo(\R^N)$. To this end, we consider the following \textit{persistence conditions}, which are reminiscent of those in, e.g., \cite{CSComFail,AnconaBentaibiRossi,ConsensusGraphon}.

\begin{Def}[Scrambling- and connectivity-persistent signals]
\label{def:Persistent}
Given a pair of parameters $(\tau,\mu) \in \R_+^* \times (0,1]$, we denote by $\Gpazo_{\eta}(\tau,\mu) \subset L^{\infty}(\R_+,\Lpazo(\R^N))$ the collection of all time-dependent adjacency matrices satisfying
\begin{equation*}
\eta \bigg( \frac{1}{\tau} \INTSeg{\Ab_N(s)}{s}{t}{t+\tau} \bigg) \geq \mu
\end{equation*}
for all times $t \geq 0$. Similarly, we denote by $\Gpazo_{\lambda_2}(\tau,\mu) \subset L^{\infty}(\R_+,\Lpazo(\R^N))$ the collection of all time-dependent balanced adjacency matrices whose graph-Laplacians satisfy 
\begin{equation*}
\lambda_2 \bigg( \frac{1}{\tau} \INTSeg{\Lb_N(s)}{s}{t}{t+\tau} \bigg) \geq \mu
\end{equation*}
for all times $t \geq 0$. 
\end{Def}

Our main results, which are developed in Sections~\ref{section:Diameter} and \ref{section:Variance} respectively, will crucially rely on the following weak-$^{\star}$ compactness property enjoyed by scrambling- and connectivity-persistent signals. 

\begin{prop}[Compactness of persistent signals] 
\label{prop:CompactnessPersistent}
For every $(\tau,\mu) \in \R_+^* \times (0,1]$, the sets $\Gpazo_{\eta}(\tau,\mu)$ and $\Gpazo_{\lambda_2}(\tau,\mu)$ are compact for the weak-$^{\star}$ topology of $L^{\infty}(\R_+,\Lpazo(\R^N))$. 
\end{prop}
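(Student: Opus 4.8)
The plan is to deduce the result from the Banach--Alaoglu theorem. Identifying $L^{\infty}(\R_+,\Lpazo(\R^N))$ with the dual of the separable space $L^1(\R_+,\Lpazo(\R^N))$ --- which is legitimate since $\Lpazo(\R^N)$ is finite-dimensional --- the weak-$^{\star}$ topology is metrizable on bounded subsets. As every element of $\Gpazo_{\eta}(\tau,\mu)$ or $\Gpazo_{\lambda_2}(\tau,\mu)$ has entries valued in $[0,1]$, both sets are bounded in $L^{\infty}$, so it suffices to show that they are sequentially weak-$^{\star}$ closed. Accordingly, I would fix a sequence $(\Ab_N^k)_{k \in \N}$ in one of these sets, converging weak-$^{\star}$ to some $\Ab_N \in L^{\infty}(\R_+,\Lpazo(\R^N))$, and prove that $\Ab_N(\cdot)$ still belongs to it.

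The first step is to check that the limit remains an admissible (and, in the second case, balanced) interaction topology. Testing the weak-$^{\star}$ convergence against $\mathbf{1}_I \otimes E_{ij}$, where $I \subset \R_+$ is a bounded interval and $E_{ij}$ a matrix unit, gives $\int_I a^k_{ij}(s) \, \dn s \to \int_I a_{ij}(s) \, \dn s$; since the prelimit integrals lie in $[0,|I|]$ and $\int_I \big(1 - a^k_{ii}(s)\big) \, \dn s = 0$ for every such $I$, one deduces $a_{ij}(s) \in [0,1]$ and $a_{ii}(s) = 1$ for a.e.\ $s \geq 0$, so $\Ab_N(\cdot)$ is a genuine time-dependent adjacency matrix. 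The same argument applied to the affine functional $\Ab_N \mapsto \sum_j (a_{ij} - a_{ji})$ shows that if each $\Ab_N^k(\cdot)$ is balanced, then so is $\Ab_N(\cdot)$.

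The heart of the proof is to propagate the persistence inequalities. Fixing $t \geq 0$ and testing the convergence against $\mathbf{1}_{[t,t+\tau]} \otimes E_{ij}$ yields that the averaged matrices $\tfrac{1}{\tau} \int_t^{t+\tau} \Ab_N^k(s) \, \dn s$ converge entrywise --- hence in any norm on $\Lpazo(\R^N)$ --- to $\tfrac{1}{\tau} \int_t^{t+\tau} \Ab_N(s) \, \dn s$, which is again an adjacency matrix (balanced in the second case), being an average of such. For scrambling persistence, $\Ab_N \mapsto \eta(\Ab_N)$ is continuous on $\Lpazo(\R^N)$, being a finite minimum of sums of the continuous functions $\Ab_N \mapsto \min\{a_{ik},a_{jk}\}$, so passing to the limit in $\eta\big(\tfrac{1}{\tau}\int_t^{t+\tau}\Ab_N^k\big) \geq \mu$ gives $\eta\big(\tfrac{1}{\tau}\int_t^{t+\tau}\Ab_N\big) \geq \mu$. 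For connectivity persistence, I would invoke the variational description provided by \eqref{eq:Lambda2Charac}: on balanced graphs, $\lambda_2(\Lb_N)$ equals the infimum over all $\xb$ with $\Vpazo(\xb) \neq 0$ of the ratio $\big(\tfrac{1}{2N^2}\sum_{i,j} a_{ij}|x_i - x_j|^2\big)/\Vpazo(\xb)$, which for each fixed $\xb$ is an affine, hence continuous, function of $\Ab_N$; an infimum of continuous functions being upper semicontinuous, and the averaged Laplacian coinciding by linearity with the Laplacian of the balanced averaged adjacency matrix, one obtains $\mu \leq \limsup_k \lambda_2\big(\tfrac{1}{\tau}\int_t^{t+\tau}\Lb_N^k\big) \leq \lambda_2\big(\tfrac{1}{\tau}\int_t^{t+\tau}\Lb_N\big)$, the first inequality holding because each prelimit term is at least $\mu$. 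Since $t \geq 0$ was arbitrary, $\Ab_N(\cdot)$ belongs to the set under consideration, which is therefore weak-$^{\star}$ closed, and hence weak-$^{\star}$ compact.

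The only genuinely delicate point is this last one: one must keep track of the correct direction of semicontinuity for $\lambda_2$, and ensure throughout that the algebraic connectivity is well defined --- namely, that averages of balanced matrices stay balanced, so that $\lambda_2$ of the averaged Laplacian makes sense and \eqref{eq:Lambda2Charac} can be applied to the prelimit quantities. The remaining steps are routine consequences of weak-$^{\star}$ duality and of the finite-dimensionality of $\Lpazo(\R^N)$.
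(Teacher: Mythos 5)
Your argument is correct, and its overall skeleton (Banach--Alaoglu, metrizability of the weak-$^{\star}$ topology on bounded sets, entrywise convergence of the window averages, passage to the limit in the persistence inequality) coincides with the paper's; the scrambling case is handled identically in both, via the continuity of $\eta$ on $\Lpazo(\R^N)$. The genuine difference lies in the connectivity case: the paper identifies $\lambda_2$ with the smallest non-zero eigenvalue of the symmetric part of the Laplacian and invokes the continuous dependence of the spectrum on the matrix, whereas you work directly with the variational characterisation \eqref{eq:Lambda2Charac}, writing $\lambda_2$ as an infimum over $\xb$ of functions that are affine in $\Ab_N$, and using only the resulting upper semicontinuity together with the uniform lower bound $\mu$ on the prelimit terms. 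Your route is more self-contained (no appeal to the spectral characterisation of Wu or to perturbation theory for eigenvalues) and quietly sidesteps the fact that the ``smallest non-zero eigenvalue'' is not, in isolation, a continuous function of the matrix; it requires only that you keep the direction of the semicontinuity straight, which you do. You are also more scrupulous than the paper in verifying that the weak-$^{\star}$ limit is itself an admissible signal --- entries in $[0,1]$, unit diagonal, and balancedness, the last point being needed for $\lambda_2$ of the averaged limit to be well defined --- details the paper leaves implicit but which genuinely belong to the proof.
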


\begin{proof}
Noting at first that since every sequence $(\Ab_N^n(\cdot)) \subset \Gpazo_{\eta}(\tau,\mu)$ is uniformly bounded, it follows from the Banach--Alaoglu theorem (see e.g. \cite[Theorem 3.16]{Brezis}) that 
\begin{equation*}
\Ab_N^n(\cdot) ~\underset{n \to +\infty}{\rightharpoonup^*}~ \Ab_N(\cdot)
\end{equation*}
for some $\Ab_N(\cdot) \in L^{\infty}(\R_+,\Lpazo(\R^N))$, along a suitable subsequence. Moreover, it follows from the definition of weak-$^{\star}$ convergence that 
\begin{equation*}
\frac{1}{\tau} \INTSeg{\Ab_N^n(s)}{s}{t}{t+\tau} ~\underset{n \to +\infty}{\longrightarrow}~ \frac{1}{\tau} \INTSeg{\Ab_N(s)}{s}{t}{t+\tau}
\end{equation*}
in $\Lpazo(\R^N)$ for each $t \geq 0$. In particular, the  convergence also holds coefficient-wise, which together with the definition \eqref{eq:Scrambling} of the scrambling coefficient then yields
\begin{equation*}
\begin{aligned}
\eta \bigg( \frac{1}{\tau} \INTSeg{\Ab_N(s)}{s}{t}{t+\tau} \bigg) & = \lim_{n \to +\infty} \eta \bigg( \frac{1}{\tau} \INTSeg{\Ab_N^n(s)}{s}{t}{t+\tau} \bigg) \\
& \geq \mu
\end{aligned}
\end{equation*}
and allows us to conclude that $\Ab_N(\cdot) \in \Gpazo_{\eta}(\tau,\mu)$. On the other hand, if $(\Ab_N^n(\cdot)) \subset \Gpazo_{\lambda_2}(\tau,\mu)$, it then follows from the very same arguments that 
\begin{equation*}
\frac{1}{\tau} \INTSeg{\Lb_N^n(s)}{s}{t}{t+\tau} ~\underset{n \to +\infty}{\longrightarrow}~ \frac{1}{\tau} \INTSeg{\Lb_N(s)}{s}{t}{t+\tau}
\end{equation*}
along a subsequence for all times $t \geq 0$ and some $\Lb_N(\cdot) \in L^{\infty}(\R_+,\Lpazo(\R^N))$. One may then conclude by observing that the algebraic connectivity of a balanced graph is the smallest non-zero eigenvalue of the symmetric part of its Laplacian matrix, see e.g. \cite[Lemma 12]{Wu2005ter}, and then use the fact that the spectrum of a matrix depends continuously thereon, by virtue e.g. of \cite[Appendix D]{Horn1985}.
\end{proof}


\section{Exponential diameter decay for scrambling-persistent topologies}
\label{section:Diameter}

In this section, we prove the main result of this article, which shows that the diameter of any solution of \eqref{eq:Multiagent} driven by a scrambling-persistent signal decays exponentially. 

\begin{thm}[Exponential diameter decay for scrambling-persistent topologies]
\label{thm:Main1}
Given a pair $(\tau,\mu) \in \R_+^* \times (0,1]$ and a compact set $\Kb^0 \subset (\R^d)^N$, there exist positive constants $\alpha,\gamma > 0$ such that
\begin{equation}
\label{eq:ThmDiamDissip}
\Dpazo(\xb(t)) \leq \alpha \Dpazo(\xb^0) \exp(-\gamma t)
\end{equation}
for all $t \geq 0$ and any solution $\xb(\cdot) \in \Lip(\R_+,(\R^d)^N)$ of \eqref{eq:Multiagent} driven by a signal $\Ab_N(\cdot) \in \Gpazo_{\eta}(\tau,\mu)$. 
\end{thm}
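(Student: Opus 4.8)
The plan is to deduce Theorem~\ref{thm:Main1} from a combination of a compactness statement for the solution set (Lemma~\ref{lem:CompactnessSolutionSet}) and a strict-decay statement over one window (Lemma~\ref{lem:DiameterDecay}), following the compactification philosophy of Fenichel. Concretely, fix $(\tau,\mu)$ and the compact initial datum set $\Kb^0$. By Proposition~\ref{prop:NormEst} all solutions starting in $\Kb^0$ remain in a fixed compact set $\Kb \subset (\R^d)^N$, so by \eqref{eq:PhiEst} the nonlinearity $\phi$ is pinched between two positive constants $c_\phi \le C_\phi$ along every such trajectory; in particular the whole family of solutions is uniformly Lipschitz. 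The first step is therefore to set up the \emph{compactified} solution set: the collection of pairs $(\xb(\cdot),\Ab_N(\cdot))$ where $\xb(\cdot)$ solves \eqref{eq:Multiagent} with $\xb^0 \in \Kb^0$ and $\Ab_N(\cdot) \in \Gpazo_\eta(\tau,\mu)$. Using Proposition~\ref{prop:CompactnessPersistent} for the signal component and Arzel\`a--Ascoli plus a standard passage to the limit in the integral form of \eqref{eq:Multiagent} for the trajectory component, this set is compact (for uniform convergence on compact time intervals on the $\xb$-side and weak-$^\star$ on the $\Ab_N$-side) and \emph{shift-invariant}: if $(\xb(\cdot),\Ab_N(\cdot))$ is in it, so is $(\xb(\cdot+h),\Ab_N(\cdot+h))$ for every $h \ge 0$ — the time-shift of a scrambling-persistent signal is still scrambling-persistent with the same $(\tau,\mu)$. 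This is exactly the content I would attribute to Lemma~\ref{lem:CompactnessSolutionSet}.

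The second step is the quantitative one-window contraction. I expect Lemma~\ref{lem:DiameterDecay} to assert the existence of a constant $\delta = \delta(\tau,\mu,c_\phi,C_\phi,\Kb) \in (0,1)$ such that $\Dpazo(\xb(T+\tau)) \le (1-\delta)\,\Dpazo(\xb(T))$ for every $T \ge 0$ and every solution in the family. The mechanism behind it: pick indices $(i,j)$ realizing the diameter at time $T+\tau$; track $\langle x_i(t)-x_j(t), \, x_i^{T}-x_j^{T}\rangle$-type quantities, where the reference direction is frozen. By Proposition~\ref{prop:Diameter}, every agent lies, at every later time, in a closed half-space on the ``correct'' side of $x_i(T)$ and of $x_j(T)$ in that direction. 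Differentiating $|x_i(t)-x_j(t)|^2$ along the flow and using that $a_{ii}=a_{jj}=1$ together with $\phi \ge c_\phi$, the diameter cannot grow and, more importantly, the amount by which the gap shrinks over $[T,T+\tau]$ is controlled from below by how much ``mass'' the averaged matrix $\frac1\tau\int_T^{T+\tau}\Ab_N$ places on indices $k$ that are simultaneously well-connected to $i$ and to $j$ — i.e.\ precisely by $\eta\big(\frac1\tau\int_T^{T+\tau}\Ab_N(s)\,\dn s\big) \ge \mu$. Turning this heuristic into the clean bound $(1-\delta)$ is, I expect, the technical heart of the paper, and I would not try to reproduce it here beyond this sketch.

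Granting those two lemmas, the conclusion is a soft argument. Define $m : \Gpazo_\eta(\tau,\mu) \times \Kb^0 \to [0,1]$ (or rather on the compactified solution set) by $m(\xb(\cdot),\Ab_N(\cdot)) := \Dpazo(\xb(\tau))/\Dpazo(\xb(0))$ when $\Dpazo(\xb^0)>0$, and $m := 0$ when $\xb^0$ is already a consensus. Lemma~\ref{lem:DiameterDecay} gives $m \le 1-\delta < 1$ pointwise. One then shows, using Lemma~\ref{lem:CompactnessSolutionSet} and continuity of $\Dpazo$, that $\sup m$ is attained and hence that there is a uniform $\rho := \sup m \le 1-\delta < 1$; alternatively one can bypass the sup and simply iterate the one-window estimate directly. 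Either way, applying the one-window bound on the successive intervals $[k\tau,(k+1)\tau]$ — legitimate because of shift-invariance, each shifted trajectory again starting from a point of $\Kb$ with a scrambling-persistent driver — yields $\Dpazo(\xb(k\tau)) \le \rho^{\,k}\,\Dpazo(\xb^0)$. For an arbitrary $t \ge 0$, write $t = k\tau + r$ with $0 \le r < \tau$, use $\Dpazo(\xb(t)) \le \Dpazo(\xb(k\tau))$ (monotonicity from Proposition~\ref{prop:NormEst}, since $\Dpazo$ is nonincreasing along \eqref{eq:Multiagent}), and obtain $\Dpazo(\xb(t)) \le \rho^{\lfloor t/\tau\rfloor}\Dpazo(\xb^0) \le \rho^{-1}\,\Dpazo(\xb^0)\exp\!\big(-\tfrac{|\ln\rho|}{\tau}\,t\big)$. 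Setting $\alpha := \rho^{-1} = 1/\rho$ and $\gamma := |\ln \rho|/\tau > 0$ gives \eqref{eq:ThmDiamDissip}.

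The main obstacle, as signalled in the introduction, is Lemma~\ref{lem:DiameterDecay}: extracting a \emph{uniform} strict contraction of the diameter over one time window from nothing more than a lower bound on the scrambling coefficient of the \emph{time-averaged} graph — as opposed to an instantaneous connectivity assumption — is delicate, because at any given instant the graph may be almost disconnected and the extremal pair $(i,j)$ may have no common neighbour at all. The compactness/shift-invariance machinery is what converts the merely \emph{strict} (non-uniform) decay into a \emph{uniform} exponential rate, but the strict decay over a window still has to be proven by hand via the half-space geometry of Proposition~\ref{prop:Diameter} and a careful bookkeeping of the contributions of indices $k$ to $\tfrac{\dn}{\dn t}|x_i(t)-x_j(t)|^2$ integrated against the average matrix; everything else is routine.
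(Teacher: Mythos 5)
Your outline matches the paper's strategy: compactness of the solution set (Lemma~\ref{lem:CompactnessSolutionSet}), strict one-window decay (Lemma~\ref{lem:DiameterDecay}), and an extremal/contradiction argument to upgrade strict decay to a uniform contraction factor, followed by iteration over windows $[k\tau,(k+1)\tau]$ using shift-invariance of $\Gpazo_{\eta}(\tau,\mu)$ and monotonicity of the diameter. One clarification first: Lemma~\ref{lem:DiameterDecay} as proved in the paper asserts only the \emph{strict}, non-quantitative inequality $\Dpazo(\xb(t+\tau))<\Dpazo(\xb(t))$; it does not produce a uniform $\delta(\tau,\mu,c_\phi,C_\phi,\Kb^0)>0$ as you posit in your second paragraph. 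If it did, the compactification machinery would be essentially superfluous, since one could iterate $(1-\delta)$ directly. The entire point of the compactness step is to manufacture the uniform constant from the merely strict decay, and your final paragraph shows you understand this; but it makes your second paragraph internally inconsistent with your third.

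The genuine gap is in the ``soft argument''. You define $m(\xb(\cdot),\Ab_N(\cdot)):=\Dpazo(\xb(\tau))/\Dpazo(\xb(0))$, set $m:=0$ on consensus data, and claim that by compactness $\sup m$ is attained and is $<1$. This fails because $m$ is not upper semicontinuous at trajectories with $\Dpazo(\xb(0))=0$: a maximizing (or contradiction) sequence $(\xb_n(\cdot))$ with $\Dpazo(\xb_n(\tau))>\frac{n-1}{n}\Dpazo(\xb_n(0))>0$ may perfectly well converge uniformly to a limit $\xb(\cdot)$ with $\Dpazo(\xb(0))=0$, in which case the limit inequality $\Dpazo(\xb(\tau))\geq\Dpazo(\xb(0))$ is vacuous and Lemma~\ref{lem:DiameterDecay} yields no contradiction. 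The paper closes this case with a blow-up argument that your proposal is missing: one rescales
\begin{equation*}
\yb_n(t):=\frac{\xb_n(t)-\bar{\xb}_n(0)}{\Dpazo(\xb_n(0))},
\end{equation*}
checks that $\yb_n(\cdot)$ solves the same type of system with kernel $\phi(\Dpazo(\xb_n(0))\,|\cdot|)$ and the same persistent signals, reruns the compactness argument on the unit ball of initial data to extract a limit $\yb(\cdot)$ solving the linear dynamics with constant kernel $\phi(0)>0$, and observes that $\Dpazo(\yb(0))=1$ while $\Dpazo(\yb(\tau))\geq 1$, which \emph{does} contradict Lemma~\ref{lem:DiameterDecay} applied to this limit system. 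Without this normalization step (or an a priori lower bound keeping $\Dpazo(\xb_n(0))$ away from zero, which you do not have), the contradiction argument is incomplete.
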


As explained above, the proof of this result will be based on a compactification approach and relies crucially on the following topological result. 

\begin{lem}[Compactness of solution sets]
\label{lem:CompactnessSolutionSet}
For every $(\tau,\mu) \in \R_+^* \times (0,1]$ and each compact set $\Kb^0 \subset (\R^d)^N$, the collection of curves
\begin{equation*}
\begin{aligned}
\Spazo_{(\tau,\mu)}^{\eta}(\Kb^0) := \bigg\{ & \xb(\cdot) \in \Lip(\R_+,(\R^d)^N) ~ \text{solving \eqref{eq:Multiagent}} \\
& \; \text{with $\xb^0 \in \Kb^0$ and $\Ab_N(\cdot) \in \Gpazo_{\eta}(\tau,\mu)$}\bigg\}
\end{aligned}
\end{equation*}
is a compact subset of $C^0(\R_+,(\R^d)^N)$ for the topology of uniform convergence on compact sets. 
\end{lem}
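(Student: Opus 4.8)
The plan is to realize $\Spazo_{(\tau,\mu)}^{\eta}(\Kb^0)$ as the image of a compact set under a continuous map, or more directly to argue sequential compactness by an Arzel\`a--Ascoli argument combined with passage to the limit in the dynamics. First I would fix a sequence $(\xb^n(\cdot)) \subset \Spazo_{(\tau,\mu)}^{\eta}(\Kb^0)$, with each $\xb^n$ solving \eqref{eq:Multiagent} for some datum $\xb^{0,n} \in \Kb^0$ and some signal $\Ab_N^n(\cdot) \in \Gpazo_{\eta}(\tau,\mu)$. By Proposition~\ref{prop:NormEst}, all the curves $\xb^n(\cdot)$ take values in a fixed compact set (the convex hull of $\Kb^0$, coordinatewise), so by \eqref{eq:PhiEst} they are uniformly Lipschitz with a common constant: indeed $|\dot x_i^n(t)| \le C_\phi \, \diam(\co(\Kb^0))$ for a.e.\ $t$ and all $i$. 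Hence the family is equibounded and equicontinuous on every compact time interval, and Arzel\`a--Ascoli together with a diagonal extraction over an exhausting sequence of intervals yields a subsequence (not relabelled) with $\xb^n(\cdot) \to \xb(\cdot)$ uniformly on compact subsets of $\R_+$, for some $\xb(\cdot) \in C^0(\R_+,(\R^d)^N)$, with $\xb^0 \in \Kb^0$ since $\Kb^0$ is closed.

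Next I would pass to the limit in the equation. By Proposition~\ref{prop:CompactnessPersistent}, up to a further subsequence $\Ab_N^n(\cdot) \rightharpoonup^* \Ab_N(\cdot)$ for some $\Ab_N(\cdot) \in \Gpazo_{\eta}(\tau,\mu)$. Writing \eqref{eq:Multiagent} in integral form, $x_i^n(t) = x_i^{0,n} + \frac{1}{N}\sum_j \int_0^t a_{ij}^n(s)\,\phi(|x_i^n(s)-x_j^n(s)|)(x_j^n(s)-x_i^n(s))\,\dn s$, I would show the right-hand side converges to the corresponding expression with $\Ab_N$ and $\xb$. The subtle point is that the integrand is a product of the weakly-$^\star$ convergent factor $a_{ij}^n$ and the strongly (uniformly) convergent factor $b_{ij}^n(s) := \phi(|x_i^n(s)-x_j^n(s)|)(x_j^n(s)-x_i^n(s))$; since $\phi$ is locally Lipschitz and restricted to a compact range by \eqref{eq:PhiEst}, and $\xb^n \to \xb$ uniformly on $[0,t]$, we get $b_{ij}^n \to b_{ij}$ uniformly on $[0,t]$, where $b_{ij}(s) = \phi(|x_i(s)-x_j(s)|)(x_j(s)-x_i(s))$. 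The standard lemma that $f_n \rightharpoonup^* f$ in $L^\infty$ and $g_n \to g$ in $L^1$ (here $L^\infty$-, hence $L^1$-convergence on the bounded interval) imply $\int f_n g_n \to \int f g$ then lets me pass to the limit, so $\xb(\cdot)$ satisfies \eqref{eq:Multiagent} with signal $\Ab_N(\cdot) \in \Gpazo_\eta(\tau,\mu)$ and datum $\xb^0 \in \Kb^0$. In particular $\xb(\cdot)$ is locally Lipschitz (again by \eqref{eq:PhiEst}) and belongs to $\Spazo_{(\tau,\mu)}^\eta(\Kb^0)$, proving sequential compactness; since the topology of uniform convergence on compact sets on $C^0(\R_+,(\R^d)^N)$ is metrizable, sequential compactness is compactness.

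The main obstacle is the limit passage in the bilinear term, i.e.\ justifying $\int_0^t a_{ij}^n b_{ij}^n \to \int_0^t a_{ij} b_{ij}$ cleanly: weak-$^\star$ convergence alone is not preserved under products, so one genuinely needs the compensating strong convergence of $b_{ij}^n$, which in turn relies on the a priori confinement of all trajectories to a common compact set (Proposition~\ref{prop:NormEst}) to both get the uniform Lipschitz/equicontinuity bound \emph{and} the uniform control $c_\phi \le \phi \le C_\phi$ of \eqref{eq:PhiEst}. A minor bookkeeping point is the diagonal argument needed to upgrade uniform convergence on each $[0,T]$ to uniform convergence on compact sets, and to extract a single subsequence working simultaneously for the trajectories and the signals; this is routine. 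Everything else — closedness of $\Kb^0$, metrizability of the target topology, local Lipschitzness of the limit — is immediate.
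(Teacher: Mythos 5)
Your argument is correct and is essentially identical to the paper's proof: confinement via Proposition~\ref{prop:NormEst}, equi-Lipschitz bounds from \eqref{eq:PhiEst}, Ascoli--Arzel\`a extraction, weak-$^{\star}$ compactness of the signals via Proposition~\ref{prop:CompactnessPersistent}, and passage to the limit in the integral form by pairing the weak-$^{\star}$ convergent $a_{ij}^n$ with the uniformly convergent nonlinear factor. Your added remarks on the diagonal extraction and metrizability are details the paper leaves implicit but do not change the route.
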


\begin{proof}
To begin with, let us fix a sequence of curves $(\xb_n(\cdot)) \subset \Spazo_{(\tau,\mu)}^{\eta}(\Kb^0)$, and observe that $\xb_n(t) \in \Kb^0$ for all times $t \geq 0$ and each $n \geq 1$ as a consequence of Proposition~\ref{prop:NormEst}. In addition, one has for each $\tau,t \geq 0$ that
\begin{equation*}
\begin{aligned}
& |x_i^n(t) - x_i^n (\tau)| \\
& \hspace{0.05cm} \leq  \frac{1}{N} \sum_{j=1}^N \bigg| \INTSeg{a_{ij}^n(s) \phi(|x_i^n(s) - x_j^n(s)|)(x_j^n(s) - x_i^n(s))}{s}{\tau}{t} \, \bigg| \\
& \leq C_{\phi} \mathrm{diam}(\Kb^0) \, |t-\tau|  
\end{aligned}
\end{equation*}
by \eqref{eq:PhiEst}, and the elements of the sequence $(\xb_n(\cdot)) \subset \Lip(\R_+,\R^N)$ are thus uniformly equi-Lipschitz. As they are valued in a common compact set, it follows from the Ascoli--Arzel\`a theorem (see e.g. \cite[Theorem 11.28]{Rudin1987}) that  
\begin{equation}
\label{eq:UnifConvTraj}
\sup_{t \in I} |\xb(t) - \xb_n(t)| ~\underset{n \to +\infty}{\longrightarrow}~ 0  
\end{equation}
for some $\xb(\cdot) \in \Lip(\R_+,(\R^d)^N)$ and over each compact interval $I \subset \R_+$, along a subsequence that we do not relabel. At this stage, note that owing to Proposition~\ref{prop:CompactnessPersistent}, the sequence of signals $(\Ab_N^n(\cdot)) \subset \Gpazo_{\eta}(\tau,\mu)$ associated with the curves $(\xb_n(\cdot)) \subset \Lip(\R_+,(\R^d)^N)$ admits a weak-$^{\star}$ cluster point $\Ab_N(\cdot) \in \Gpazo_{\eta}(\tau,\mu)$. Up to extracting a further subsequence, this implies in particular that  
\begin{equation}
\label{eq:WeakStarConvIntegral}
\INTSeg{a_{ij}^n(t) f(t)}{t}{0}{+\infty} ~\underset{n \to +\infty}{\longrightarrow}~ \INTSeg{a_{ij}(t) f(t)}{t}{0}{+\infty}
\end{equation}
for every function $f(\cdot) \in L^1(\R_+,\R)$. Therefore, by combining \eqref{eq:UnifConvTraj} and \eqref{eq:WeakStarConvIntegral} together with the fact that $\phi : \R_+ \to \R_+^*$ is uniformly continuous on compact sets by Hypothesis~\ref{hyp:H}-$(ii)$, one readily obtains that
{\small 
\begin{equation*}
\begin{aligned}
& \bigg| \INTSeg{a_{ij}(s) \phi(|x_i(s) - x_j(s)|)(x_j(s)-x_i(s))}{s}{0}{t} \\
& \hspace{0.4cm} - \INTSeg{a_{ij}^n(s) \phi(|x_i^n(s) - x_j^n(s)|)(x_j^n(s)-x_i^n(s))}{s}{0}{t} \, \bigg| \\
& \leq \bigg|\INTSeg{(a_{ij}(s) - a_{ij}^n(s)) \phi(|x_i(s) - x_j(s)|)(x_j(s)-x_i(s))}{s}{0}{t} \, \bigg| \\
& \hspace{0.5cm} + \sup_{s \in [0,t]} \Big| \phi(|x_i(s) - x_j(s)|)(x_j(s)-x_i(s)) \\
& \hspace{1.9cm} - \phi(|x_i^n(s) - x_j^n(s)|)(x_j^n(s)-x_i^n(s)) \Big| \underset{n \to +\infty}{\longrightarrow} 0
\end{aligned}
\end{equation*}
}
%
for all times $t \geq 0$. By passing to the limit as $n \to +\infty$ in the integral form of \eqref{eq:Multiagent} satisfied by $\xb_n(\cdot)$ for each $n \geq 1$, we may conclude that the limit curve is a solution of \eqref{eq:Multiagent} driven by $\Ab_N(\cdot) \in \Gpazo_{\eta}(\tau,\mu)$, hence $\xb(\cdot) \in \Spazo_{(\tau,\mu)}^{\eta}(\Kb^0)$.  
\end{proof}

In the following lemma, we prove that the diameter of every solution of \eqref{eq:Multiagent} generated by a signal in $\Gpazo_{\eta}(\tau,\mu)$ must decrease strictly over every time interval of length $\tau >0$.  

\begin{lem}[Strict diameter decay under scrambling persistence]  
\label{lem:DiameterDecay}
Given a pair $(\tau,\mu) \in \R_+^* \times (0,1]$ and a 
compact set $\Kb^0 \subset (\R^d)^N$, it holds that 
%
\begin{equation*}
\Dpazo(\xb(t+\tau)) < \Dpazo(\xb(t))
\end{equation*}
for all times $t \geq 0$ and every curve $\xb(\cdot) \in \Spazo_{(\tau,\mu)}^{\eta}(\Kb^0)$ satisfying $\Dpazo(\xb(0)) > 0$.
\end{lem}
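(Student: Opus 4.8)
The plan is to track the diameter along a single solution over the window $[t,t+\tau]$ and show that a uniformly positive lower bound on the scrambling coefficient of the averaged adjacency matrix forces a genuine strict drop. Without loss of generality I would take $t=0$ (the dynamics is autonomous in the sense that a time-shift of an admissible signal is again admissible, and the solution set is shift-invariant). Fix a pair $(i,j)$ realizing $\Dpazo(\xb(\tau)) = |x_i(\tau)-x_j(\tau)|$. Using the unit vector $e := (x_i(\tau)-x_j(\tau))/|x_i(\tau)-x_j(\tau)|$ and Proposition~\ref{prop:Diameter} (applied at time $\tau$, so the scalar-product inequalities hold for every point of $\co(\xb(\sigma))$ with $\sigma \le \tau$ — note the direction of the inclusion $\co(\xb(\sigma))\subset\co(\xb(0))$ runs the other way, so I would instead invoke Proposition~\ref{prop:Diameter} at an earlier time with the pair maximizing the diameter there, or equivalently run the whole argument backwards in time), one has $\langle x_k(s), e\rangle \le \langle x_i(s'), e\rangle$ for the appropriate times. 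The cleaner route: write $D(s) := \langle x_i(s) - x_j(s), e\rangle$ where $e$ is now frozen as the diameter direction at the \emph{initial} time where the maximizing pair is $(i,j)$, so that $D(0) = \Dpazo(\xb(0))$ and $D(s) \le \Dpazo(\xb(s)) \le \Dpazo(\xb(0))$ for all $s$ by Proposition~\ref{prop:NormEst}. Then $\dot D(s) = \dot x_i(s)\cdot e - \dot x_j(s)\cdot e$, and plugging in \eqref{eq:Multiagent} gives
\[
\dot D(s) = \frac{1}{N}\sum_{k=1}^N a_{ik}(s)\phi_{ik}(s)\big(\langle x_k(s),e\rangle - \langle x_i(s),e\rangle\big) - \frac{1}{N}\sum_{k=1}^N a_{jk}(s)\phi_{jk}(s)\big(\langle x_k(s),e\rangle - \langle x_j(s),e\rangle\big),
\]
where $\phi_{ik}(s) := \phi(|x_i(s)-x_j(s)|) \in [c_\phi, C_\phi]$ by \eqref{eq:PhiEst}. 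Both bracketed terms have a sign (first $\le 0$, second $\ge 0$ after accounting for the projections relative to the extreme points), so $\dot D(s) \le 0$ and the diameter is non-increasing along this direction; the whole game is to extract a \emph{quantitative} strict decrease.

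The key step is to lower-bound $-\dot D(s)$ by something involving $\min\{a_{ik}(s),a_{jk}(s)\}$ so the scrambling coefficient can enter after time-averaging. For each index $k$, the two contributions are $-\tfrac1N a_{ik}(s)\phi_{ik}(s)\,\delta_i^k(s)$ and $+\tfrac1N a_{jk}(s)\phi_{jk}(s)\,\delta_j^k(s)$ with $\delta_i^k(s) := \langle x_k(s)-x_i(s),e\rangle \le 0$ and $\delta_j^k(s) := \langle x_k(s)-x_j(s),e\rangle \ge 0$. Since $\delta_j^k(s) - \delta_i^k(s) = \langle x_i(s)-x_j(s),e\rangle = D(s)$, we get for each $k$ a contribution to $-\dot D(s)$ that is bounded below by $\tfrac{c_\phi}{N}\min\{a_{ik}(s),a_{jk}(s)\}\big(|\delta_i^k(s)| + |\delta_j^k(s)|\big) \ge \tfrac{c_\phi}{N}\min\{a_{ik}(s),a_{jk}(s)\}\,D(s)$. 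Summing over $k$,
\[
-\dot D(s) \ge \frac{c_\phi}{N}\, D(s) \sum_{k=1}^N \min\{a_{ik}(s),a_{jk}(s)\} \ge c_\phi\, D(s)\, \eta(\Ab_N(s)) \ge 0,
\]
but pointwise $\eta(\Ab_N(s))$ may vanish, so I integrate: $D(\tau) \le D(0)\exp\big(-c_\phi \int_0^\tau \eta(\Ab_N(s))\,ds\big)$ won't quite work directly because the index pair $(i,j)$ attaining the diameter changes with $s$. So instead I would argue: if the diameter did \emph{not} strictly decrease on $[0,\tau]$, then by monotonicity $\Dpazo(\xb(s)) = \Dpazo(\xb(0)) =: d_0 > 0$ for all $s\in[0,\tau]$, which forces — via a compactness/continuity argument in the (now constant-diameter) dynamics — that the velocity terms responsible for the above lower bound are degenerate on a set of full measure; concretely the integral $\int_0^\tau \sum_k \min\{a_{ik}(s),a_{jk}(s)\}\,ds$ must be zero for the limiting extremal pair, contradicting $\eta\big(\tfrac1\tau\int_0^\tau \Ab_N(s)\,ds\big) \ge \mu > 0$, which by Jensen/linearity of the integral and the concavity of $(a,b)\mapsto\min\{a,b\}$ gives $\tfrac1N \sum_k \tfrac1\tau\int_0^\tau \min\{a_{ik}(s),a_{jk}(s)\}\,ds \ge \eta\big(\tfrac1\tau\int_0^\tau\Ab_N\big) \ge \mu$.

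The main obstacle — and the place I would spend the most care — is handling the fact that the maximizing pair $(i,j)$ and the maximizing direction are not fixed along the trajectory, so the differential inequality for a frozen functional $D(s)$ only controls a \emph{lower} estimate of $\Dpazo(\xb(\cdot))$, not the diameter itself. The clean way around this is the contradiction argument sketched above: suppose $\Dpazo(\xb(\tau)) = \Dpazo(\xb(0))$; by Proposition~\ref{prop:NormEst} (or rather the fact that $\Dpazo(\xb(\cdot))$ is non-increasing, which follows from it) the diameter is then constant on $[0,\tau]$; pick $(i,\tau),(j,\tau)$ attaining it at the right endpoint, freeze $e$, and run the estimate \emph{backward} from $\tau$, using Proposition~\ref{prop:Diameter} at time $\tau$ to guarantee $\delta_i^k, \delta_j^k$ have the stated signs for all $s \le \tau$ (this is exactly what Proposition~\ref{prop:Diameter} provides: the scalar product inequalities for points of $\co(\xb(s))$ with $s\le\tau$ when we know the maximizer at the later time $\tau$ — I'd need to double-check the statement is oriented this way, and if not, symmetrize the argument by also invoking it at $0$). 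Then $D(s) \equiv d_0$ on $[0,\tau]$ forces $\dot D \equiv 0$ a.e., hence $\sum_k \min\{a_{ik}(s),a_{jk}(s)\}\,d_0 = 0$ for a.e.\ $s$ (since each term in the sum bounding $-\dot D$ is non-negative), hence $\min\{a_{ik}(s),a_{jk}(s)\} = 0$ for a.e.\ $s$ and every $k$, hence $\tfrac1\tau\int_0^\tau \min\{a_{ik}(s),a_{jk}(s)\}\,ds = 0$; but then $\min\big\{\tfrac1\tau\int_0^\tau a_{ik},\, \tfrac1\tau\int_0^\tau a_{jk}\big\} \le$ ... actually one needs the pointwise-min integral to dominate the min of integrals is \emph{false} in general, so here is the fix: $\min\{\bar a_{ik}, \bar a_{jk}\} \ge \tfrac1\tau\int_0^\tau \min\{a_{ik},a_{jk}\} $ \emph{is} the right inequality (min of averages $\ge$ average of mins), giving $\tfrac1N\sum_k \min\{\bar a_{ik},\bar a_{jk}\} \ge \tfrac1N\sum_k \tfrac1\tau\int_0^\tau\min\{a_{ik},a_{jk}\} = 0$, contradicting $\eta\big(\tfrac1\tau\int_0^\tau\Ab_N\big)\ge\mu>0$ which demands this quantity be $\ge\mu$ for the worst pair, in particular $\ge\mu$ for \emph{our} pair $(i,j)$. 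Wait — $\eta$ is a min over pairs, so it gives a bound for \emph{every} pair; good. This closes the proof. (I should double-check the min-of-averages-dominates-average-of-mins direction: yes, $\min\{\mathbb{E}X,\mathbb{E}Y\}\ge\mathbb{E}\min\{X,Y\}$ since $\mathbb{E}X\ge\mathbb{E}\min\{X,Y\}$ and $\mathbb{E}Y\ge\mathbb{E}\min\{X,Y\}$ — correct.)
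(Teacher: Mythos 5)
Your per-index estimate $-\dot D(s)\ge \tfrac{c_\phi}{N}\,D(s)\sum_{k}\min\{a_{ik}(s),a_{jk}(s)\}$ is the classical scrambling bound and is fine wherever the sign conditions $\delta_i^k(s)\le 0\le\delta_j^k(s)$ hold, but the argument has a fatal gap at its very last step. What your differential inequality yields, after integration and under the constancy assumption, is that $\INTSeg{\min\{a_{ik}(s),a_{jk}(s)\}}{s}{t}{t+\tau}=0$ for every $k$. This does \emph{not} contradict the hypothesis $\eta\big(\tfrac1\tau\INTSeg{\Ab_N(s)}{s}{t}{t+\tau}\big)\ge\mu$: as you yourself note, $\min\{\bar a_{ik},\bar a_{jk}\}\ge\tfrac1\tau\INTSeg{\min\{a_{ik}(s),a_{jk}(s)\}}{s}{t}{t+\tau}$, so the vanishing of the right-hand side only gives $\min\{\bar a_{ik},\bar a_{jk}\}\ge 0$, which is vacuous and perfectly compatible with $\eta(\bar{\Ab}_N)\ge\mu$ (your earlier invocation of Jensen is in the wrong direction; concavity of the minimum gives average-of-mins $\le$ min-of-averages, not $\ge$). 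Concretely, take $a_{ik}=1$ on the first half of the window and $0$ on the second, and $a_{jk}$ the reverse: then $\min\{a_{ik}(s),a_{jk}(s)\}\equiv 0$ while $\min\{\bar a_{ik},\bar a_{jk}\}=1/2$. The persistence condition of Definition~\ref{def:Persistent} constrains the scrambling coefficient of the \emph{time-averaged} matrix, which is strictly weaker than a lower bound on the time-average of the pointwise minima; your proof only exploits the latter, i.e.\ it establishes the lemma under a strictly stronger hypothesis than the one stated. This is precisely the difficulty that Step~2 of the paper's proof is built to overcome: one extracts a single index $k_{ij}$ with $\tfrac1\tau\int a_{ik_{ij}}\ge\mu$ \emph{and} $\tfrac1\tau\int a_{jk_{ij}}\ge\mu$ separately, shows (after first proving that $x_i(\cdot)$ and $x_j(\cdot)$ must be stationary when the diameter is constant) that the two integrals in \eqref{eq:DerivativeMaxPairBis} both vanish, and then reaches a contradiction because $x_{k_{ij}}$ cannot coincide in projection with both endpoints, a Gr\"onwall estimate keeping at least one of the two nonnegative scalar products uniformly positive; the two adjacency coefficients are never minimised against each other pointwise in time.

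A second, lesser issue is the sign condition itself. Freezing the pair $(i,j)$ and the direction $e$ at one endpoint of the window does not guarantee $\langle x_k(s)-x_i(s),e\rangle\le 0$ throughout the window: Proposition~\ref{prop:Diameter} controls $\langle x_k(s),e\rangle$ by $\langle x_i(t),e\rangle$ for $s\ge t$, not by $\langle x_i(s),e\rangle$, and agent $i$ may cease to be extremal in the direction $e$ at later times, so $\delta_i^k(s)$ can change sign; running the estimate ``backward'' from $\tau$ is not covered by the proposition either, since the convex hulls are nested the other way. You flag this obstacle but do not resolve it. The paper's case distinction (switching versus persistent maximising pair, the former handled by showing no new pair can re-achieve the old diameter value) together with the stationarity of $x_i,x_j$ in the persistent case is what legitimises the frozen-direction computation there.
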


\begin{proof}
First, observe that the diameter function $t \in \R_+ \mapsto \Dpazo(\xb(t)) \in \R_+$ is nonincreasing along elements of $\Spazo_{(\tau,\mu)}^{\eta}(\Kb^0)$ as a simple consequence of Proposition~\ref{prop:NormEst}. In the sequel, given $t \geq 0$ at which $\Dpazo(\xb(t)) > 0$, we let
\begin{equation*}
\Pi(t) := \underset{(i,j)\in \{1,\dots,N\}^2}{\mathrm{argmax}} |x_i(t) - x_j(t)|
\end{equation*}
be the collection of all pairs that realise the diameter. We also note that the diameter is differentiable almost everywhere as the pointwise maximum of a collection of equi-Lipschitz maps, and it follows from Danskin's theorem (see e.g. \cite[Chapter III]{Danskin1967}) that
\begin{equation*}
\begin{aligned}
& \frac{1}{2} \derv{}{t} \Dpazo(\xb(t))^2 \\
& = \max_{(i,j) \in \Pi(t)} \langle \dot x_i(t) - \dot x_j(t) , x_i(t) - x_j(t) \rangle \\
& = \max_{(i,j) \in \Pi(t)} \frac{1}{N} \sum_{k=1}^N a_{ik}(s) \phi(|x_i(s) - x_k(s)|) \\
& \hspace{2.7cm} \times \langle x_k(s) - x_i(s) , x_i(s) - x_j(s) \rangle \\
& \hspace{0.7cm} - \frac{1}{N} \sum_{k=1}^N a_{jk}(s) \phi(|x_i(s) - x_k(s)|) \\
& \hspace{2.7cm} \times \langle x_k(s) - x_j(s) , x_i(s) - x_j(s) \rangle \\
& \geq -2C_{\phi} \Dpazo(\xb(t))^2
\end{aligned}
\end{equation*}
which implies up to an application Gr\"onwall's lemma that the diameter of any solution of \eqref{eq:Multiagent} cannot vanish in finite-time if it is initially non-zero. 

\paragraph*{Step 1 -- The case of switching maximising pairs} 

In order to show that the diameter is strictly contracting over the interval $[t,t+\tau]$, we first consider the situation
\begin{equation}
\label{eq:Intersection}
\bigcap_{s \in [t,t+\tau]} \Pi(s) = \emptyset, 
\end{equation}
namely we assume that it is not possible to find a pair of indices $(i,j) \in \{1,\dots,N\}^2$ that maximises the diameter over the whole interval $[t,t+\tau]$. In that case, there must exist some $\delta \in (0,\tau]$ for which $\Pi(t) \cap \Pi(t+\delta) = \emptyset$, whence
\begin{equation*}
\begin{aligned}
|x_i(t+\delta) - x_j(t+\delta)| & < \Dpazo(\xb(t+\delta)) \\
& \leq \Dpazo(\xb(t)) = |x_i(t) - x_j(t)|
\end{aligned}
\end{equation*}
for every pair $(i,j) \in \Pi(t)$. As the distances between all the agents realising the diameter at time $t \geq 0$ shrank strictly over the interval $[t,t+\delta]$, it either follows that the diameter has strictly decreased, or that there exists a pair of elements $(i_{\delta},j_{\delta}) \in \Pi(t+\delta)$ which achieves the value $\Dpazo(\xb(t))$. As the diameter of a collection of points coincides with that of its convex hull, this may only happen provided
\begin{equation}
\label{eq:DeltaPair0}
x_{i_{\delta}}(t+\delta) = x_i(t) \qquad \text{and} \qquad  x_{j_{\delta}}(t+\delta) = x_j(t)
\end{equation}
for some pair $(i,j) \in \Pi(t)$. One can show, however, that this situation cannot occur. To see why, observe that, by Proposition~\ref{prop:Diameter}, 
\begin{equation}
\label{eq:ConvexHullDerivative}
\begin{aligned}
& \derv{}{s} \langle x_i(t) - x_{i_{\delta}}(s) , x_i(t) - x_j(t) \rangle \\
& \hspace{0.5cm} = \frac{1}{N} \sum_{k=1}^N a_{i_{\delta}k}(s) \phi(|x_{i_{\delta}}(s) - x_k(s)|) \\
& \hspace{3.4cm}  \times \langle x_{i_{\delta}}(s) - x_k(s) , x_i(t) - x_j(t) \rangle \\
& \hspace{0.5cm} \geq \frac{1}{N} \sum_{l=1}^N a_{i_{\delta}k}(s) \phi(|x_{i_{\delta}}(s) - x_k(s)|) \\
& \hspace{3.4cm} \times \langle x_{i_{\delta}}(s) - x_i(t) , x_i(t) - x_j(t) \rangle \\
& \hspace{0.5cm} \geq - C_{\phi} \langle x_i(t) - x_{k_{ij}}(s) , x_i(t) - x_j(t) \rangle
\end{aligned}
\end{equation}
which, up to an application of Gr\"onwall's lemma, yields
\begin{equation}
\label{eq:DeltaPair1}
\begin{aligned}
\langle x_i(t) - x_{i_{\delta}} &(t+\delta) , x_i(t) - x_j(t) \rangle \\
& \geq \Big( \langle x_i(t) - x_{i_{\delta}}(t) , x_i(t) - x_j(t) \rangle \Big) e^{-C_{\phi}\delta}. 
\end{aligned}
\end{equation}
One may likewise show that 
\begin{equation}
\label{eq:DeltaPair2}
\begin{aligned}
\langle x_{j_{\delta}} (t+\delta) &- x_j(t) , x_i(t) - x_j(t) \rangle \\
& \geq \Big( \langle x_{j_{\delta}} (t) - x_j(t) , x_i(t) - x_j(t) \rangle \Big) e^{-C_{\phi}\delta}, 
\end{aligned}
\end{equation}
at which point it is possible to conclude by recalling that $(i_{\delta},j_{\delta}) \notin \Pi(t)$, which together with \eqref{eq:DeltaPair1} and \eqref{eq:DeltaPair2} combined with Proposition~\ref{prop:Diameter}, imply that either  
\begin{equation*}
\langle x_i(t) - x_{i_{\delta}} (t+\delta) , x_i(t) - x_j(t) \rangle > 0
\end{equation*}
or 
\begin{equation*}
\langle x_j(t) - x_{j_{\delta}} (t+\delta) , x_i(t) - x_j(t) \rangle < 0,
\end{equation*}
which is incompatible with \eqref{eq:DeltaPair0}. 

\paragraph*{Step 2 -- The case of a fixed maximising pair}

In this second step, we assume that
\begin{equation*}
\bigcap_{s \in [t,t+\tau]} \Pi(s) \neq \emptyset
\end{equation*}
so that there exists a pair of indices $(i,j) \in \Pi(t)$ such that
\begin{equation*}
\Dpazo(\xb(s)) = |x_i(s) - x_j(s)|
\end{equation*}
for all times $s \in [t,t+\tau]$. Suppose now by contradiction that $s \in [t,t+\tau] \mapsto \Dpazo(\xb(s)) \in \R_+^*$ is constant, which by what precedes would amount to having
\begin{equation}
\label{eq:DerivativeMaxPair}
\begin{aligned}
& \frac{1}{2} \derv{}{t} |x_i(t) - x_j(t)|^2 \\
& \hspace{0.7cm} = \langle \dot{x}_i(s) - \dot{x}_j(s) , x_i(s) - x_j(s) \rangle \\
& \hspace{0.7cm} = \frac{1}{N} \sum_{k=1}^N a_{ik}(s) \phi(|x_i(s) - x_k(s)|) \\
& \hspace{2.7cm} \times \langle x_k(s) - x_i(s) , x_i(s) - x_j(s) \rangle \\
& \hspace{0.7cm} - \frac{1}{N} \sum_{k=1}^N a_{jk}(s) \phi(|x_i(s) - x_k(s)|) \\
& \hspace{2.7cm} \times \langle x_k(s) - x_j(s) , x_i(s) - x_j(s) \rangle \\
& \hspace{0.7cm} = 0
\end{aligned}
\end{equation}
for almost every $s \in [t,t+\tau]$. Then, it follows from Hypotheses~\ref{hyp:H} and Proposition~\ref{prop:Diameter} that all the summands in the right-hand side of the previous expression are nonpositive, with the scalar products being equal to zero if and only if $x_k(s) = x_i(s)$ or $x_k(s) = x_j(s)$, respectively. Whence, this further implies that
\begin{equation*}
\left\{
\begin{aligned}
a_{ik}(s) & |x_k(s) - x_i(s)| = 0, \\
a_{jk}(s) & |x_k(s) - x_j(s)| = 0, 
\end{aligned}
\right.
\end{equation*}
for almost every $s \in [t,t+\tau]$ and each $k \in \{1,\dots,N\}$, at which point it necessarily holds
\begin{equation*}
x_i(\cdot) = x_i(t) \qquad \text{and} \qquad x_j(\cdot) = x_j(t)
\end{equation*}
over $[t,t+\tau]$ for that particular pair $(i,j) \in \cap_{s \in [t,t+\tau]}\Pi(s)$.

Presently, let us note that since $\xb(\cdot) \in \Spazo_{(\tau,\mu)}^{\eta}(\Kb^0)$, there must exist some index $k_{ij} \in \{1,\dots,N\}$ such that 
\begin{equation*}
\min\bigg\{ \frac{1}{\tau} \INTSeg{a_{ik_{ij}}(s)}{s}{t}{t+\tau} , \frac{1}{\tau} \INTSeg{a_{jk_{ij}}(s)}{s}{t}{t+\tau} \bigg\} \geq \mu. 
\end{equation*}
Yet, by reproducing the computations in \eqref{eq:DerivativeMaxPair} while resorting yet again to Proposition~\ref{prop:Diameter}, one may show that 
\begin{equation}
\label{eq:DerivativeMaxPairBis}
\left\{
\begin{aligned}
& \INTSeg{a_{ik_{ij}}(s) \langle x_i(t) - x_{k_{ij}}(s) , x_i(t) - x_j(t) \rangle}{s}{t}{t+\tau} = 0, \\
& \INTSeg{a_{jk_{ij}}(s) \langle x_{k_{ij}}(s) - x_j(t) , x_i(t) - x_j(t) \rangle}{s}{t}{t+\tau} = 0. 
\end{aligned}
\right.
\end{equation}
At this stage, either of the following two situations may occur. If $k_{ij} \in \{i,j \}$, say for instance $k_{ij} = j$, we may directly conclude as the resulting identity
\begin{equation*}
\begin{aligned}
& \INTSeg{a_{ik_{ij}}(s) \langle x_i(t) - x_{k_{ij}}(s) , x_i(t) - x_j(t) \rangle}{s}{t}{t+\tau} \\
& \hspace{4.4cm} \geq \mu \, |x_i(t) - x_j(t)|^2 > 0
\end{aligned}
\end{equation*}
violates \eqref{eq:DerivativeMaxPairBis}. If on the other hand $k_{ij} \in \{1,\dots,N\} \setminus \{i,j\}$, we can still produce a contradiction with \eqref{eq:DerivativeMaxPairBis} by repeating the computation in \eqref{eq:DeltaPair0} and then applying Gr\"onwall's lemma to obtain that
\begin{equation}
\label{eq:ScalarProduct1}
\begin{aligned}
\langle x_i(t) - x_{k_{ij}} &(s) , x_i(t) - x_j(t) \rangle \\
& \geq \Big( \langle x_i(t) - x_{k_{ij}}(t) , x_i(t) - x_j(t) \rangle \Big) e^{C_{\phi}(t-s)}
\end{aligned}
\end{equation}
for all $s \in [t,t+\tau]$, and similarly 
\begin{equation}
\label{eq:ScalarProduct2}
\begin{aligned}
\hspace{-0.1cm} \langle x_{k_{ij}}(s) - x_j&(t) , x_i(t) - x_j(t) \rangle \\
& \geq \Big( \langle x_{k_{ij}}(t) - x_j(t) , x_i(t) - x_j(t) \rangle \Big) e^{C_{\phi}(t-s)}. 
\end{aligned}
\end{equation}
Now, since $x_{k_{ij}}(t)$ cannot be simultaneously equal to $x_i(t)$ and $x_j(t)$, it subsequently stems from \eqref{eq:ScalarProduct1} and \eqref{eq:ScalarProduct2} that
\begin{equation*}
\begin{aligned}
d_{ij} := \max \bigg\{ & \min_{s \in [t,t+\tau]} \langle x_i(t) - x_{k_{ij}}(s) , x_i(t) - x_j(t) \rangle, \\
& \min_{s \in [t,t+\tau]} \langle x_{k_{ij}}(s) - x_j(t) , x_i(t) - x_j(t) \rangle \bigg\} > 0.
\end{aligned}
\end{equation*}
But this fact in is incompatible with \eqref{eq:DerivativeMaxPairBis}, as it imposes that at least one of the integrals therein is positive. 
\end{proof}

We are finally ready to prove our first main result, Theorem~\ref{thm:Main1}, which is based on the preliminary results derived above in Lemma~\ref{lem:CompactnessSolutionSet} and  Lemma~\ref{lem:DiameterDecay}.

\begin{proof}[Proof of Theorem~\ref{thm:Main1}]
As in our preparatory results, the argument goes by contradiction. Indeed, suppose at first that there exists some $\kappa \in (0,1)$ such that every curve $\xb(\cdot) \in \Spazo_{(\tau,\mu)}^{\eta}(\Kb^0)$ complies with the decay estimate  
\begin{equation}
\label{eq:DissipEta}
\Dpazo(\xb(\tau)) \leq \kappa  \Dpazo(\xb^0).
\end{equation}
We then claim that \eqref{eq:ThmDiamDissip} holds. Indeed, note first that one may express any time $t \geq 0$ as $t=n\tau+s$ for some $n \geq 1$ and $s\in [0,\tau)$, and observe that set $\Gpazo_{\eta}(\tau,\mu)$ of scrambling-persistent signals is invariant under positive time shifts by construction. Moreover, up to replacing the compact set $\Kb^0 \subset (\R^d)^N$ by its convex hull, we may assume that the latter is invariant under the dynamics of \eqref{eq:Multiagent} by Proposition~\ref{prop:NormEst}. Hence, it holds that 
\begin{equation*}
\begin{aligned}
\Dpazo(\xb(t)) & \leq \Dpazo(\xb(n \tau)) \\
& \leq \kappa^n \Dpazo(\xb^0) \\
& \leq \alpha \Dpazo(\xb^0) \exp(-\gamma t)
\end{aligned}
\end{equation*}
for every $\xb^0 \in \Kb^0$, with $\alpha := 1/\kappa^{\tau}$ and $\gamma :=-\log(\kappa)/\tau$. 

Assume now by contradiction that for every $\kappa \in (0,1)$, there exists a curve $\xb_{\kappa}(\cdot) \in \Spazo_{(\tau,\mu)}^{\eta}(\Kb^0)$ which violates \eqref{eq:DissipEta}. In particular, there exists then a sequences of trajectories $(\xb_n(\cdot)) \subset \Spazo_{(\tau,\mu)}^{\eta}(\Kb^0)$ whose elements satisfy
\begin{equation}
\label{eq:DiameterContradictionIneq}
\Dpazo(\xb_n(\tau)) > \frac{n-1}n  \Dpazo(\xb_n(0))>0
\end{equation}
for each $n \geq 1$, and it follows from the compactness result established in Lemma~\ref{lem:CompactnessSolutionSet} that 
\begin{equation*}
\sup_{t \in [0,\tau]} |\xb(t) - \xb_n(t)| ~\underset{n \to \infty}{\longrightarrow}~ 0
\end{equation*}
for some limit curve $\xb(\cdot) \in \Spazo_{(\tau,\mu)}^{\eta}(\Kb^0)$, along an adequate subsequence. Then, by letting $n \to +\infty$ in \eqref{eq:DiameterContradictionIneq}, we would further obtain that 
\begin{equation*}
\Dpazo(\xb(\tau)) \geq \Dpazo(\xb(0)), 
\end{equation*}
which violates the diameter decay established for elements of $\Spazo_{(\tau,\mu)}^{\eta}(\Kb^0)$ in Lemma~\ref{lem:DiameterDecay}, provided that $\Dpazo(\xb(0)) > 0$.

If on the other hand, $\Dpazo(\xb(0)) = 0$, we may reason as follows. Notice first that $\lim_{n\to \infty}\bar \xb_n(0)=\bar \xb(0)$, and consider the curves $\yb_n(\cdot) \in \Lip(\R_+,(\R^d)^N)$ defined by
\begin{equation}
\label{eq:ybDef}
\yb_n(t) := \frac{\xb_n(t)-\bar{\xb}_n(0)}{\Dpazo(\xb_n(0))},  
\end{equation}
for all times $t \geq 1$ and each $n \geq 1$, which correspond the dilations of $\xb_n(\cdot)$ around the point $\bar{\xb}_n(0)$ by a factor $1/ \Dpazo(\xb_n(0))$. It may then be checked that the components of $\yb_n(\cdot)$ solve the system of equation
\begin{align*}
    &\dot y_i^n(t) = \frac{1}{N} \sum_{j=1}^N a^n_{ij}(t) \phi \big( \Dpazo(\xb_n(0))|y_i^n(t)-y_j^n(t)| \big) \\
    & \hspace{4.95cm} \times (y_j^n(t)-y_i^n(t)),
\end{align*}
wherein $\Ab_N^n(\cdot) := (a^n_{ij}(\cdot))_{1 \leq i,j \leq N}\in \Gpazo_{\eta}(\tau,\mu)$ are the signals generating the curves $\xb_n(\cdot) \in \Spazo_{(\tau,\mu)}^{\eta}(\Kb^0)$. By repeating the argument of Lemma~\ref{lem:CompactnessSolutionSet}, taking 
\begin{equation*}
\Qb^0 := \Big\{ \yb \in (\R^d)^N ~\, \textnormal{s.t.}~ \max_{1 \leq i \leq N} |y_i| \leq 1 \Big\},    
\end{equation*}
as compact set of initial conditions, one can show that, up to an extraction, the sequence $(\yb_n(\cdot)) \subset \Spazo_{(\tau,\mu)}^{\eta}(\Qb^0)$ converges uniformly on compact subsets of $\R_+$ to some limit curve $\yb(\cdot)$ solution of the dynamics
\begin{align*}
\dot y_{i}(t) = \frac{1}{N} \sum_{j=1}^N a_{ij}(t) \phi(0)(y_{j}(t)-y_{i}(t)),
\end{align*}
for some $\Ab_N(\cdot) \in \Gpazo_{\eta}(\tau,\mu)$. Moreover, 
it follows from \eqref{eq:DiameterContradictionIneq} that $\Dpazo(\yb_n(0))=1$ for every $n\ge1$, yielding that  $\Dpazo(\yb(0))=1$. 
Thanks to \eqref{eq:ybDef}, we also have that
\begin{equation*}
\Dpazo(\yb(\tau))\ge \liminf_{n\to \infty}\frac{\Dpazo(\xb_n(\tau))}{\Dpazo(\xb_n(0))}=1,
\end{equation*}
which is in contradiction with Lemma~\ref{lem:DiameterDecay} applied to $\yb(\cdot) \in \Lip(\R_+,(\R^d)^N)$, whose dynamics is of the form \eqref{eq:Multiagent} wherein the nonlinear kernel $\phi \in \Lip_{\loc}(\R_+,\R_+^*)$ has been replaced by the constant $\phi(0) \in \R_+^*$. 
\end{proof}


\section{Exponential variance decay for connectivity-persistent topologies}
\label{section:Variance}

In this section, we turn our attention to the following class of linear balanced multiagent dynamics
\begin{equation}
\label{eq:MultiagentLin}
\left\{
\begin{aligned}
& \dot x_i(t) = \frac{1}{N} \sum_{j=1}^N a_{ij}(t)(x_j(t)-x_i(t)), \\
& x_i(0) = x_i^0.
\end{aligned}
\right.
\end{equation}
By leveraging the compactification technique developed hereinabove, we establish a simpler, yet useful exponential contraction result for the variance of such systems. Let us mention that a compactification argument with a similar point of view was used in \cite[Theorem~7 and Lemma~10]{Chaillet2008} to study the stabilisation of persistently excited linear systems in the neutrally stable case.

\begin{thm}[Exponential variance decay for connectivity-persistent balanced interaction topologies]
\label{thm:Main2}
Given a pair $(\tau,\mu) \in \R_+^* \times (0,1]$ and a compact set $\Kb^0 \subset \R^N$, there exist positive constants $\alpha,\gamma > 0$ such that
\begin{equation}
\label{eq:ThmVarDissip}
\Vpazo(\xb(t)) \leq \alpha \Vpazo(\xb^0) \exp(-\gamma t)
\end{equation}
for all $t \geq 0$ and any solution $\xb(\cdot) \in \Lip(\R_+,(\R^d)^N)$ of \eqref{eq:Multiagent} driven by a signal $\Ab_N(\cdot) \in \Gpazo_{\lambda_2}(\tau,\mu)$. 
\end{thm}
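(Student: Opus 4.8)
The plan is to mirror the compactification strategy used for the diameter, with the variance playing the role of the Lyapunov function. Since the set $\Gpazo_{\lambda_2}(\tau,\mu)$ is again weak-$^{\star}$ compact by Proposition~\ref{prop:CompactnessPersistent} and invariant under positive time shifts, the same abstract scheme applies: it suffices to establish (i) compactness of the associated solution set in $C^0(\R_+,(\R^d)^N)$ for uniform convergence on compact sets, and (ii) a strict decay of the variance over every time window of length $\tau$ along solutions with $\Vpazo(\xb(0))>0$. Granting these two facts, a contradiction argument identical in structure to the proof of Theorem~\ref{thm:Main1} yields a uniform contraction factor $\kappa\in(0,1)$ with $\Vpazo(\xb(\tau))\leq\kappa\Vpazo(\xb^0)$, and then iterating over windows of length $\tau$ and writing $t=n\tau+s$ produces the exponential estimate \eqref{eq:ThmVarDissip} with $\alpha=1/\kappa^\tau$ and $\gamma=-\log(\kappa)/\tau$.

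First I would record the compactness of the solution set. This is entirely analogous to Lemma~\ref{lem:CompactnessSolutionSet}: solutions of \eqref{eq:MultiagentLin} starting in a fixed compact (convex, by Proposition~\ref{prop:NormEst}) set are uniformly bounded and uniformly equi-Lipschitz because $|x_j-x_i|\leq\diam(\Kb^0)$ and the coefficients $a_{ij}$ lie in $[0,1]$, so Ascoli--Arzelà gives a uniformly convergent subsequence; meanwhile the driving signals, being in the weak-$^{\star}$ compact set $\Gpazo_{\lambda_2}(\tau,\mu)$, have a cluster point in that same set, and passing to the limit in the integral form of \eqref{eq:MultiagentLin} (now even easier since there is no nonlinear factor $\phi$ to handle) shows the limit curve solves the system with the limiting balanced signal. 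One small point worth noting is that balancedness passes to the weak-$^{\star}$ limit, since $d_i=\sum_j a_{ij}=\sum_j a_{ji}$ is a linear constraint preserved under the integral convergence already used in Proposition~\ref{prop:CompactnessPersistent}.

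Next I would prove the strict variance decay over $[t,t+\tau]$, which is the analogue of Lemma~\ref{lem:DiameterDecay} and is where the real content lies. Using that the interaction is balanced, one computes along solutions of \eqref{eq:MultiagentLin} that $\bar\xb(\cdot)$ is constant and
\begin{equation*}
\frac{1}{2}\derv{}{t}\Vpazo(\xb(t)) = -\frac{1}{2N^2}\sum_{i,j=1}^N a_{ij}(t)|x_i(t)-x_j(t)|^2 \leq 0,
\end{equation*}
so $\Vpazo$ is nonincreasing; integrating over $[t,t+\tau]$ and using convexity of $\xb\mapsto\sum_{i,j}a_{ij}|x_i-x_j|^2$ together with the defining inequality \eqref{eq:Lambda2Charac} for the averaged Laplacian $\frac1\tau\int_t^{t+\tau}\Lb_N(s)\,\dn s$, one obtains a lower bound on the total variance decrement of the form $\Vpazo(\xb(t))-\Vpazo(\xb(t+\tau))\geq c\,\tau\,\mu\,\Vpazo(\xb(t+\tau))$ provided the trajectory does not drift too much over the window. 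The clean way to make this rigorous, and the step I expect to be the main obstacle, is exactly the compactification/contradiction device: if strict decay failed, one would extract a sequence of solutions with $\Vpazo(\xb_n(\tau))/\Vpazo(\xb_n(0))\to1$, rescale by $1/\sqrt{\Vpazo(\xb_n(0))}$ (which leaves \eqref{eq:MultiagentLin} invariant because it is linear, unlike the nonlinear case where $\phi(0)$ appears), pass to a limit solution with $\Vpazo(\yb(0))=1$ and $\Vpazo(\yb(\tau))\geq1$, hence $\Vpazo(\yb(\cdot))\equiv1$ on $[0,\tau]$; then $\derv{}{t}\Vpazo(\yb(t))=0$ a.e.\ forces $\int_0^\tau\sum_{i,j}a_{ij}(s)|y_i(s)-y_j(s)|^2\,\dn s=0$, and since $\yb(\cdot)$ is Lipschitz with $\yb(0)$ non-consensual, a continuity argument shows $|y_i(s)-y_j(s)|$ stays bounded below near $s=0$ on the edges that are active on average, contradicting $\lambda_2(\frac1\tau\int_0^\tau\Lb_N)\geq\mu>0$ via \eqref{eq:Lambda2Charac}. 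Assembling (i), (ii) and the iteration then completes the proof; I would also remark that, as in Theorem~\ref{thm:Main1}, one reduces at the outset to $\Kb^0$ convex and invariant so that the time-shift and iteration steps are legitimate.
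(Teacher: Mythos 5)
Your proposal follows the paper's strategy exactly: weak-$^{\star}$ compactness of $\Gpazo_{\lambda_2}(\tau,\mu)$, compactness of the solution set (the paper's Lemma~\ref{lem:CompactnessSolutionSetBis}), a strict variance decay over windows of length $\tau$ (the paper's Lemma~\ref{lem:VarianceDecay}), and the same contradiction-plus-iteration scheme as in Theorem~\ref{thm:Main1}, with the rescaling simplified by linearity. The architecture is right and the remark that balancedness survives weak-$^{\star}$ limits is a good catch.

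Two points deserve tightening. First, the paper proves the strict decay \emph{directly} for each fixed trajectory, with no compactness needed at that stage: if $\Vpazo(\xb(\cdot))$ were constant on $[t,t+\tau]$, then $a_{ij}(s)|x_i(s)-x_j(s)|^2=0$ for a.e.\ $s$ and all $i,j$, which forces $\dot x_i\equiv 0$, i.e.\ the configuration is \emph{frozen} on the whole window; one then evaluates the dissipation integral at the frozen configuration and applies \eqref{eq:Lambda2Charac} to the time-averaged adjacency matrix to get $\Vpazo(\xb(t+\tau))\leq(1-\tau\mu)\Vpazo(\xb(t))$, a contradiction. Your closing step --- ``a continuity argument shows $|y_i(s)-y_j(s)|$ stays bounded below near $s=0$ on the edges that are active on average'' --- does not quite close as stated: edges active on average over $[0,\tau]$ need not be active near $s=0$, and \eqref{eq:Lambda2Charac} is a quadratic-form inequality on the averaged graph, not an edge-wise positivity statement. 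The freezing argument is the clean way to convert the pointwise vanishing of the dissipation into a statement about the averaged Laplacian, and you should substitute it for the continuity heuristic. Second, when you rescale by $1/\sqrt{\Vpazo(\xb_n(0))}$ you must first recenter by the (conserved) mean $\bar{\xb}_n(0)$, as in \eqref{eq:ybDef}; otherwise the rescaled initial data need not remain in a compact set when $\Vpazo(\xb_n(0))\to 0$. With these two repairs your argument coincides with the paper's proof.
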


The proof of this theorem is essentially the same as that of Theorem~\ref{thm:Main1}, and hinges on the following compactness and strict decay property enjoyed by solutions of \eqref{eq:MultiagentLin}.

\begin{lem}[Compactness of solution sets]
\label{lem:CompactnessSolutionSetBis}
For every $(\tau,\mu) \in \R_+^* \times (0,1]$ and each compact set $\Kb^0 \subset (\R^d)^N$, the collection of curves 
\begin{equation*}
\begin{aligned}
\Spazo_{(\tau,\mu)}^{\lambda_2}(\Kb^0) := \bigg\{ & \xb(\cdot) \in \Lip(\R_+,(\R^d)^N) ~ \text{solving \eqref{eq:MultiagentLin}} \\
& \; \text{with $\xb^0 \in \Kb^0$ and $\Ab_N(\cdot) \in \Gpazo_{\lambda_2}(\tau,\mu)$}\bigg\}
\end{aligned}
\end{equation*}
is a compact subset of $C^0(\R_+,(\R^d)^N)$ for the topology of uniform convergence on compact sets. 
\end{lem}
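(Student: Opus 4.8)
The plan is to reproduce verbatim the argument of Lemma~\ref{lem:CompactnessSolutionSet}, replacing $\Gpazo_{\eta}(\tau,\mu)$ by $\Gpazo_{\lambda_2}(\tau,\mu)$ throughout and noting that the linearity of \eqref{eq:MultiagentLin} only makes the passage to the limit more elementary. First I would fix a sequence $(\xb_n(\cdot)) \subset \Spazo_{(\tau,\mu)}^{\lambda_2}(\Kb^0)$, generated by balanced signals $\Ab_N^n(\cdot) \in \Gpazo_{\lambda_2}(\tau,\mu)$. Up to replacing $\Kb^0$ by its convex hull --- which is legitimate by Proposition~\ref{prop:NormEst} --- we may assume that $\Kb^0$ is invariant under \eqref{eq:MultiagentLin}, so that $\xb_n(t) \in \Kb^0$ for all $t \geq 0$ and $n \geq 1$. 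Since the coefficients take values in $[0,1]$, the integral form of \eqref{eq:MultiagentLin} gives
\begin{equation*}
|x_i^n(t) - x_i^n(s)| \leq \frac{1}{N}\sum_{j=1}^N \INTSeg{\big| a_{ij}^n(r)\, (x_j^n(r) - x_i^n(r)) \big|}{r}{s}{t} \leq \diam(\Kb^0) \, |t - s|,
\end{equation*}
so the curves are uniformly equi-Lipschitz and valued in a common compact set; the Ascoli--Arzel\`a theorem then yields a subsequence, not relabelled, converging to some $\xb(\cdot) \in \Lip(\R_+,(\R^d)^N)$ uniformly on every compact subinterval of $\R_+$.

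Next I would invoke Proposition~\ref{prop:CompactnessPersistent} to extract, along a further subsequence, a weak-$^{\star}$ cluster point $\Ab_N(\cdot) \in \Gpazo_{\lambda_2}(\tau,\mu)$ of $(\Ab_N^n(\cdot))$; in particular the limit signal is again balanced and still satisfies the $\lambda_2$-lower bound on its $\tau$-averaged Laplacian, and one has $\INTSeg{a_{ij}^n(t) g(t)}{t}{0}{+\infty} \to \INTSeg{a_{ij}(t) g(t)}{t}{0}{+\infty}$ for every $g(\cdot) \in L^1(\R_+,\R)$. It then remains to pass to the limit in
\begin{equation*}
x_i^n(t) = x_i^n(0) + \frac{1}{N}\sum_{j=1}^N \INTSeg{a_{ij}^n(s)\,(x_j^n(s) - x_i^n(s))}{s}{0}{t}.
\end{equation*}
Writing $a_{ij}^n(s)(x_j^n(s) - x_i^n(s)) = a_{ij}^n(s)(x_j(s) - x_i(s)) + a_{ij}^n(s)\big[ (x_j^n - x_i^n) - (x_j - x_i) \big](s)$, the integral of the first term converges to $\INTSeg{a_{ij}(s)(x_j(s) - x_i(s))}{s}{0}{t}$ by weak-$^{\star}$ convergence tested against the $L^1$ function equal to $x_j(\cdot) - x_i(\cdot)$ on $[0,t]$ and to zero elsewhere, while the integral of the second is bounded in norm by $t \sup_{[0,t]} |\xb_n - \xb|$ and hence vanishes; since moreover $x_i^n(0) \to x_i(0)$, the limit curve solves \eqref{eq:MultiagentLin} with driving signal $\Ab_N(\cdot) \in \Gpazo_{\lambda_2}(\tau,\mu)$, i.e. $\xb(\cdot) \in \Spazo_{(\tau,\mu)}^{\lambda_2}(\Kb^0)$.

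There is essentially no obstacle beyond what was already dealt with in Lemma~\ref{lem:CompactnessSolutionSet}, and two points are worth a brief remark. In contrast with the nonlinear setting, no uniform-continuity property of $\phi$ enters: the right-hand side of \eqref{eq:MultiagentLin} is bilinear in the pair $(a_{ij},\, x_j - x_i)$, so the limit passes through the elementary combination of weak-$^{\star}$ convergence of the coefficients with uniform convergence of the increments. And the balancedness of the limit signal, together with the bound $\lambda_2 \geq \mu$ on the Laplacian of its $\tau$-averages, comes for free from the weak-$^{\star}$ compactness of $\Gpazo_{\lambda_2}(\tau,\mu)$ already established in Proposition~\ref{prop:CompactnessPersistent}.
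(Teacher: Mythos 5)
Your proposal is correct and follows exactly the route the paper intends: the paper's proof of this lemma is a one-line deferral to the argument of Lemma~\ref{lem:CompactnessSolutionSet}, which you reproduce faithfully (equi-Lipschitz bounds plus Ascoli--Arzel\`a, weak-$^{\star}$ extraction of the signal via Proposition~\ref{prop:CompactnessPersistent}, and passage to the limit in the integral form). Your remark that the linearity removes the need for the uniform-continuity argument on $\phi$ is accurate and a nice clarification of why the adaptation is immediate.
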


\begin{proof}
The proof essentially follows the same arguments as that of Lemma~\ref{lem:CompactnessSolutionSet}. 
\end{proof}

\begin{lem}[Strict variance decay under connectivity persistence]
\label{lem:VarianceDecay}
Given a pair $(\tau,\mu) \in \R_+^* \times (0,1]$ and a compact set $\Kb^0 \subset (\R^d)^N$, it holds that 
\begin{equation*}
\Vpazo(\xb(t+\tau)) < \Vpazo(\xb(t))
\end{equation*}
for all times $t \geq 0$ and every curve $\xb(\cdot) \in \Spazo_{(\tau,\mu)}^{\lambda_2}(\Kb^0)$ satisfying $\Vpazo(\xb(0)) > 0$.
\end{lem}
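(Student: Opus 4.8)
The plan is to mirror the structure of Lemma~\ref{lem:DiameterDecay}, but with the variance playing the role of the diameter and the algebraic connectivity playing the role of the scrambling coefficient. First I would record the standard differential identity for the variance along solutions of \eqref{eq:MultiagentLin}: since the dynamics is balanced, the barycenter $\bar{\xb}(t)$ is conserved, so that
\begin{equation*}
\frac{1}{2} \derv{}{t} \Vpazo(\xb(t)) = \frac1N \sum_{i=1}^N \langle \dot x_i(t) , x_i(t) - \bar{\xb}(t) \rangle = -\frac{1}{2N^2} \sum_{i,j=1}^N a_{ij}(t) \langle x_i(t) - x_j(t) , x_i(t) - \bar{\xb}(t) \rangle.
\end{equation*}
Symmetrising the last sum in $(i,j)$ using the balanced structure $\sum_i a_{ij} = \sum_i a_{ji}$, one rewrites this as $-\tfrac{1}{4N^2}\sum_{i,j} a_{ij}|x_i - x_j|^2 \le 0$, which already shows that $t \mapsto \Vpazo(\xb(t))$ is nonincreasing. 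This is the analogue of the monotonicity step coming from Proposition~\ref{prop:NormEst} in the diameter proof.

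Next I would argue by contradiction: suppose $\Vpazo(\xb(\cdot))$ is constant and positive on $[t,t+\tau]$. Then the derivative computation above forces
\begin{equation*}
\INTSeg{\frac{1}{4N^2} \sum_{i,j=1}^N a_{ij}(s) |x_i(s) - x_j(s)|^2}{s}{t}{t+\tau} = 0,
\end{equation*}
and since the integrand is nonnegative it vanishes for almost every $s \in [t,t+\tau]$. On the other hand, the averaging inequality defining $\lambda_2$ in Definition~\ref{def:Algebraic}, applied to the time-averaged Laplacian $\frac1\tau\int_t^{t+\tau}\Lb_N(s)\dn s$ whose algebraic connectivity is $\ge \mu$ by the assumption $\Ab_N(\cdot)\in\Gpazo_{\lambda_2}(\tau,\mu)$, gives
\begin{equation*}
\frac{1}{2N^2} \INTSeg{\sum_{i,j=1}^N a_{ij}(s) |x_i(t) - x_j(t)|^2}{s}{t}{t+\tau} \geq \tau \mu \, \Vpazo(\xb(t)).
\end{equation*}
The right-hand side is strictly positive because $\Vpazo(\xb(t))>0$. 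To connect this with the vanishing of the integrand above, I would first use the constant-variance hypothesis to deduce that the dynamics is frozen on $[t,t+\tau]$: from $a_{ij}(s)|x_i(s)-x_j(s)|^2 = 0$ a.e.\ one gets $\dot x_i(s) = \tfrac1N\sum_j a_{ij}(s)(x_j(s)-x_i(s)) = 0$ a.e.\ (each summand vanishes), hence $x_i(\cdot) \equiv x_i(t)$ on $[t,t+\tau]$ for all $i$. Substituting $x_i(s) = x_i(t)$ back into the two displayed relations then produces the contradiction $0 = \tfrac{1}{4N^2}\int_t^{t+\tau}\sum_{i,j}a_{ij}(s)|x_i(t)-x_j(t)|^2\dn s \ge \tfrac{\tau\mu}{2}\Vpazo(\xb(t)) > 0$.

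The main obstacle I anticipate is not in any single inequality but in cleanly handling the step where constant variance is shown to freeze the trajectory: one must be careful that "$a_{ij}(s)|x_i(s)-x_j(s)|^2 = 0$ for a.e.\ $s$ and all $i,j$" genuinely implies $\dot x_i(s) = 0$ for a.e.\ $s$, which it does since $a_{ij}(s)(x_j(s)-x_i(s)) = 0$ follows coefficientwise, and then a Lipschitz function with a.e.-zero derivative is constant. Compared to Lemma~\ref{lem:DiameterDecay}, this proof is genuinely simpler because the variance is a smooth quadratic quantity rather than a max of Lipschitz functions, so there is no need for Danskin's theorem, no case split between switching and fixed maximising pairs, and no Grönwall arguments tracking scalar products along the convex hull — the algebraic connectivity inequality of Definition~\ref{def:Algebraic} directly supplies the needed coercivity on the time-averaged graph. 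Once Lemma~\ref{lem:VarianceDecay} is in place, Theorem~\ref{thm:Main2} follows verbatim from the compactification scheme of Theorem~\ref{thm:Main1}, using Lemma~\ref{lem:CompactnessSolutionSetBis} in place of Lemma~\ref{lem:CompactnessSolutionSet} and noting that, the dynamics \eqref{eq:MultiagentLin} being already linear, the degenerate rescaling step at the end of the proof of Theorem~\ref{thm:Main1} is unnecessary.
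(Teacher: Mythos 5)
Your proof is correct and follows essentially the same route as the paper: conservation of the barycenter and monotonicity of the variance via the balanced dissipation identity, then a contradiction between the frozen trajectory forced by constant variance and the coercivity supplied by the algebraic connectivity of the time-averaged graph. The only blemish is a factor-of-two slip in your intermediate identity (the dissipation rate is $-\tfrac{1}{2N^2}\sum_{i,j}a_{ij}|x_i-x_j|^2$, not $-\tfrac{1}{4N^2}\sum_{i,j}a_{ij}|x_i-x_j|^2$), which is immaterial to the argument.
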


\begin{proof}
First, we make the observation that the average of any curve $\xb(\cdot) \in \Spazo_{(\tau,\mu)}^{\lambda_2}(\Kb^0)$ is constant, since 
\begin{equation*}
\begin{aligned}
\derv{}{t} \bar{\xb}(t) & = \frac{1}{N^2} \sum_{i,j=1}^N a_{ij}(t) (x_j(t) - x_i(t)) \\
& = \frac{1}{N} \sum_{i=1}^N \bigg( \frac{1}{N} \sum_{j=1}^N (a_{ji}(t) - a_{ij}(t)) \bigg) x_i(t) = 0, 
\end{aligned}
\end{equation*}
and also that the variance is monotonically nonincreasing along solutions of \eqref{eq:MultiagentLin}. Indeed, one may easily check that
\begin{equation*}
\begin{aligned}
\frac{1}{2} \derv{}{t} \Vpazo(\xb(t)) & = \derv{}{t} \frac{1}{N} \sum_{i=1}^N |x_i(t) - \bar{\xb}(t)|^2 \\
& = \frac{1}{N^2} \sum_{i,j=1}^N a_{ij}(t) \langle x_i(t) - \bar{\xb}^0 , x_j(t) - x_i(t) \rangle \\
& = - \frac{1}{2N^2} \sum_{i,j=1}^N a_{ij}(t) |x_i(t) - x_j(t)|^2 \leq 0
\end{aligned}
\end{equation*}
where we again used the fact that the matrices $\Ab_N(t) \in \Lpazo(\R^N)$ are balanced for almost every $t \geq 0$. By contradiction, suppose that the variance is constant over the time interval $[t,t+\tau]$, so that
\begin{equation*}
\derv{}{s} \Vpazo(\xb(s)) = - \frac{1}{2N^2} \sum_{i,j=1}^N a_{ij}(s) |x_i(s) - x_j(s)|^2 = 0
\end{equation*}
for almost every $s \in [t,t+\tau]$. As all the summands appearing in the right-hand side of the previous expression are nonnegative, this would entail that 
\begin{equation*}
a_{ij}(s) |x_i(s) - x_j(s)|^2 = 0
\end{equation*}
for all times $s \in [t,t+\tau]$ and each $i \in \{1,\dots,N\}$, and consequently that $x_i(\cdot) = x_i(t)$ over the whole interval. However, it would then follow from the lower estimate \eqref{eq:Lambda2Charac} characterising the algebraic connectivity that
\begin{equation*}
\begin{aligned}
& \Vpazo(\xb(t+\tau)) \\
& = \Vpazo(\xb(t)) - \frac{1}{2N^2} \sum_{i,j=1}^N \INTSeg{a_{ij}(s)|x_i(s) - x_j(s)|^2}{s}{t}{t+\tau} \\
& = \Vpazo(\xb(t)) - \frac{1}{2N^2} \sum_{i,j=1}^N \bigg( \INTSeg{a_{ij}(s)}{s}{t}{t+\tau} \bigg) |x_i(t) - x_j(t)|^2 \\
& \leq (1-\tau \mu) \Vpazo(\xb(t)) < \Vpazo(\xb(t)) 
\end{aligned}
\end{equation*}
which leads to a contradiction.  
\end{proof}


\bibliographystyle{ieeetr}
{\footnotesize
\bibliography{ControlWassersteinBib}
}
\end{document}